\newtheorem{lemma}{Lemma}
\newcommand{\shrinkmargins}[1]{
  \addtolength{\textheight}{#1\topmargin}
  \addtolength{\textheight}{#1\topmargin}
  \addtolength{\textwidth}{#1\oddsidemargin}
  \addtolength{\textwidth}{#1\evensidemargin}
  \addtolength{\topmargin}{-#1\topmargin}
  \addtolength{\oddsidemargin}{-#1\oddsidemargin}
  \addtolength{\evensidemargin}{-#1\evensidemargin}
  }
\DeclareMathOperator{\GL}{GL}
\DeclareMathOperator{\Rea}{Re}
\DeclareMathOperator{\Gal}{Gal}
\DeclareMathOperator{\vol}{vol}
\DeclareMathOperator{\covol}{covol}
\newcommand{\field}[1]{\mathbb{#1}}
\newcommand{\Q}{\field{Q}}
\newcommand{\Z}{\field{Z}}
\newcommand{\F}{\field{F}}
\newcommand{\R}{\field{R}}
\newcommand{\C}{\field{C}}
\renewcommand{\P}{\field{P}}
\newcommand{\EE} {\mathbb{E}}
\newcommand{\SSS} {\mathcal{S}}
\newcommand{\ra}{\rightarrow}
\newcommand{\OO}{\mathcal{O}}
\newcommand{\ic}[1]{\mathfrak{#1}}
\newcommand{\LL}{\mathcal{L}}
\newcommand{\FF}{\mathcal{F}}
\newcommand{\KK}{\mathcal{K}}
\newcommand{\tensor} {\otimes}
\newcommand{\bs}{\backslash}
\newcommand{\set}[1]{\{#1\}}
\newcommand{\beq}{\begin{displaymath}}
\newcommand{\eeq}{\end{displaymath}}
\newcommand{\beqn}{\begin{equation}}
\newcommand{\eeqn}{\end{equation}}
\newcommand{\eqd}{\stackrel{d}{=}}
\theoremstyle{plain}
\newtheorem{thm}{Theorem}
\newtheorem{prop}[thm]{Proposition}
\newtheorem{cor}[thm]{Corollary}
\newtheorem{lem}[thm]{Lemma}
\theoremstyle{definition}
\newtheorem{defn}[thm]{Definition}
\newtheorem{exmp}[thm]{Example}
\newtheorem*{question}{Question}
\theoremstyle{remark}
\newtheorem{rem}[thm]{Remark}
\title{Smyth's conjecture and a non-deterministic Hasse principle}
\author{Jordan S. Ellenberg and Will Hardt}
\date{March 2025}
\begin{document}

\begin{abstract} In a 1986 paper, Smyth proposed a conjecture about which integer-linear relations were possible among Galois-conjugate algebraic numbers.  We prove this conjecture.  The main tools (as Smyth already anticipated) are combinatorial rather than number-theoretic in nature. What's more, we reinterpret Smyth's conjecture as a local-to-global principle for a ``non-deterministic system of equations" where variables are interpreted as compactly supported $K$-valued random variables rather than as elements of $K$.
\end{abstract}

\maketitle

\section{Introduction}

Are there Galois-conjugate algebraic numbers $\theta_1, \theta_2, \theta_3$ such that $17 \theta_1 + 19 \theta_2 + 29 \theta_3 = 0$?  Or such that $\theta_1^{17} \theta_2^{19} \theta_3^{29} = 1$?  In \cite{Smy86}, Smyth proposed a conjecture concerning the possible integer-linear relations between Galois conjugates, both additive and multiplicative.  (In this paper, we consider only additive relations.)  Smyth showed that the existence of such a relation among Galois conjugates was equivalent to the existence of a certain multiset of integer solutions to the relation satisfying a certain combinatorial condition.  Smyth's criterion is easiest to understand by means of an example.  Consider the matrix 
\[
\begin{bmatrix}
3 & -3 &  4&  -4& 5 & -5 & 0 & 0 \\
4 &  -4 & -3 & 3 & 0 & 0 & 5 & -5 \\
-5&  5 &  0&  0& -3 & 3 &  -4& 4 
\end{bmatrix}
\]
which we denote by $A$.  Each column of $A$ is a solution to the relation $3x+4y+5z = 0$.  The rows of $A$ are all permutations of each other.  Smyth showed that the question of which relations could occur between Galois conjugates is exactly the question of which linear relations admit matrices like the one above.  (We will state this precisely in Proposition~\ref{pr:smythcriterion}.)  So the problem can be undertaken without any reference to algebraic numbers at all -- and indeed, that is the approach we take in the present paper.

We note that the interesting feature of $A$ can be expressed in another way.  If $v_0, v_1, v_2$ are the rows of $A$, then there exist permutation matrices $\Pi_1, \Pi_2$ such that $\Pi_1 v_0 = v_1$ and $\Pi_2 v_0 = v_2$.  Then
\beq
0 = 3v_0 + 4v_1 + 5v_2 = (3 + 4\Pi_1 + 5\Pi_2) v_0.
\eeq

In particular, the matrix $3 + 4\Pi_1 + 5\Pi_2$ has a nontrivial kernel, and is thus singular.  It is easy to show (and we do so in Proposition~\ref{pr:smythcriterion} below) that the existence of a relation $\sum a_i x_i = 0$ between Galois conjugates is equivalent to the existence of a set of $n$ permutation matrices $\Pi_1, \dots, \Pi_n$ such that $\det(\sum a_i \Pi_i) = 0$; or, equivalently, of a set of $n-1$ permutation matrices such that $\sum_{i=1}^{n-1} a_i \Pi_i$ has $-a_n$ as an eigenvalue.  It is hard to believe that the question of what the eigenvalues of a fixed linear combination of permutation matrices can be has not been considered in the past, but we were unable to find any prior work (with the notable exception of \cite{speyer:moanswer}, which was a response to our asking about whether this problem had been studied!).

Smyth's conjecture asserts that a relation could occur between Galois conjugates precisely under a set of local conditions he showed to be necessary.  He demonstrated experimentally that Galois conjugates satisfying a given relation could be found in every case he tested where his conjecture predicted success.  But the number fields that arose can be of rather large degree; for example, Smyth's solutions to $17 \theta_1 + 19 \theta_2 + 29 \theta_3 = 0$ are over an extension whose Galois group is the symmetric group $S_{425}$.  It was and is hard to imagine that such examples come from any straightforward construction on the coefficients of the relation.

There has been limited progress towards proving Smyth's Conjecture; much of the related work has involved studying the circumstances under which a specific linear (or multiplicative) relation is satisfied by Galois conjugates.   For instance, in \cite{DJ15}, Dubickas and Jankauskas classified the irreducible polynomials of degree at most 8 such that a subset of their roots satisfy $\theta_1 + \theta_2 = \theta_3$ or $\theta_1 + \theta_2 + \theta_3 = 0$. 

In \cite{Dix97}, Dixon studied the relationship between the Galois action on the roots of an irreducible polynomial and the existence of a \textit{multiplicative} relation among the roots. Dixon generalized Smyth's observation that for such a multiplicative relation to exist, the associated Galois group cannot be the full symmetric group. 

The existence of a $\Q$-linear relation among $d$ Galois conjugates requires that the $\Q$-vector space spanned by the conjugates has dimension less than $d$. Another research direction has been to study how much less than $d$ the $\Q$-dimension of this vector space can be. This question is addressed in \cite{BDEPS04}, where the authors show that for almost all $n$, the largest degree of an algebraic number whose conjugates span a vector space of dimension $n$ is $2^n n!$. The authors additionally obtain sharp bounds when $\Q$ is replaced by a finite field or by a cyclotomic extension of $\Q$.

In the present paper, we prove that Smyth's conjecture is correct.  Our main theorem, as we warned, makes no reference to algebraic numbers and Galois conjugacy; we are proving Smyth's combinatorial formulation of the problem.

\begin{thm}
\label{th:main}
Let $a_1, \ldots, a_r$ be a set of elements of $\Q$.  Then there exists a probability distribution $(X_1, \ldots, X_r)$ on $\Q^r$, supported on a finite subset of the hyperplane $\sum a_i x_i = 0$, and whose marginals $X_i$ are equal in distribution, if and only if the following local conditions hold:
\begin{itemize}
\item For every prime $p$, and for every $i$,
\beq
|a_i|_p \leq \max_{j \neq i} |a_j|_p
\eeq
\item In the real absolute value,
\beq
|a_i| \leq \sum_{j \neq i} |a_j|
\eeq
for every $i$.

\end{itemize}
\end{thm}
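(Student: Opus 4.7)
For any distribution $(X_1,\ldots,X_r)$ with equal marginals and finite support, let $M_v = \max_{x \in \mathrm{supp}(X_1)} |x|_v$ for each place $v$; by equality of marginals this value is independent of $i$. Picking a point $(x_1,\ldots,x_r)$ of the joint support realizing $|x_i|_v = M_v$, the identity $a_i x_i = -\sum_{j \neq i} a_j x_j$ combined with the ultrametric inequality (at a prime $v=p$) or the triangle inequality (at $v=\infty$) immediately yields the two stated bounds.

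\textbf{Sufficiency, reduction.} I would first clear denominators to assume $a_i \in \Z$; neither the relation nor the local conditions change. By Smyth's reformulation (Proposition~\ref{pr:smythcriterion}), it suffices to construct an integer $r \times n$ matrix whose columns satisfy $\sum_i a_i x_i = 0$ and whose rows are pairwise multiset-equal, or equivalently to find permutation matrices $\Pi_1,\ldots,\Pi_r$ with $\sum a_i \Pi_i$ singular. The $p$-adic condition reads, after reduction, $\gcd_{j \neq i}(a_j) \mid a_i$ for every $i$; after further dividing by the total $\gcd$, this forces that no prime divides all but one of the $a_i$.

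\textbf{Sufficiency, construction.} My plan is induction on $\sum_i |a_i|$. In the base cases, either some $a_i=0$ (so the instance collapses to $r-1$ variables, and one checks the local conditions are inherited), or $r=2$ forcing $a_1=-a_2$ and the constant distribution. For the inductive step, the divisibility condition $\gcd_{j\neq i}(a_j)\mid a_i$ guarantees the existence of ``elementary'' integer solutions supported on pairs of coordinates, of the form $(0,\ldots,a_j,\ldots,-a_i,\ldots,0)$. I would stack such columns, possibly with multiplicities, trying to make the resulting row multisets agree exactly, or at least differ by a strictly simpler instance to which induction applies. The Archimedean triangle inequality is precisely what provides the slack needed to carry out this balancing without exceeding the pool of row entries.

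\textbf{Expected obstacle.} The main difficulty is the simultaneous balancing and reduction: matching multisets across $r$ rows while keeping the linear relation on columns is a rigid combinatorial demand, and one must check that the residual instance produced by stripping off elementary columns still satisfies both local conditions so that induction applies. Should a direct inductive construction prove elusive, I would shift to the local-to-global perspective flagged in the abstract: build local distributions place-by-place (where the stated inequalities are \emph{exactly} the existence conditions), and patch them into a single $\Q$-valued, finite-support distribution via a convexity or LP-duality argument — converting the ``non-deterministic Hasse principle'' from a slogan into the actual engine of the proof.
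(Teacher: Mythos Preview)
Your necessity argument is correct and matches the paper's.

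For sufficiency, however, there is a genuine gap. Your primary plan --- induction on $\sum|a_i|$ via stacking ``elementary'' two-supported columns --- is not a proof as written, and the obstacle you flag is exactly where the problem has historically resisted attack. Stacking columns like $(0,\ldots,a_j,\ldots,-a_i,\ldots,0)$ does nothing to decrease $\sum|a_i|$; you have not identified what the ``residual instance'' actually is, nor verified that it inherits the local conditions. The paper's introduction remarks that even for $(17,19,29)$ the solutions Smyth found sat over $S_{425}$, and that ``it was and is hard to imagine that such examples come from any straightforward construction on the coefficients.'' Absent a concrete reduction step, this is a sketch of a hope, not an argument.

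Your fallback is pointed in the right direction but underestimates the difficulty. The local solutions you would build at the archimedean place are \emph{continuous} (uniform on an ellipsoid), so there is no literal ``patching'' into a finitely supported $\Q$-distribution. What the paper actually does is: (i) take lattice points in a dilated ellipsoid, chosen compatibly with the $p$-adic data, to obtain an \emph{approximate} solution whose marginal defects are $O(D^{-2})$; then (ii) prove that this approximate solution can be perturbed to an exact one. Step (ii) is the heart of the paper and is not just LP duality: duality reduces the problem to ruling out a ``$\Gamma$-positive'' certificate $f$, but showing no such $f$ exists requires an additive-combinatorial argument controlling discrete second derivatives of $f$ (forcing $f$ to be nearly linear on the interior, hence small there) together with a boundary analysis showing a $\Gamma$-positive $f$ cannot concentrate its mass near the edge of the support. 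None of this machinery is hinted at in your proposal, and it is where the real work lies.
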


We note that there is a natural way to generalize Smyth's conjecture from $\Q$ to general global fields~\cite[Conjecture 10.1]{HY22}.  We will prove in Theorem~\ref{th:ffl2g} that the analogue of Smyth's conjecture holds for all function fields of curves over finite fields, generalizing (and very much inspired by) the main result of \cite{HY22}, which proves this fact for $K = \F_q(t)$.

We now explain the combinatorial equivalence that makes Theorem~\ref{th:main} a proof of Smyth's conjecture. The equivalence between (1) and (2) below is drawn from Smyth's paper; criterion (3) is the relation with linear combinations of permutation matrices advertised above.

\begin{prop} Let $K$ be a global field and let $a_1, \ldots, a_r$ be a set of elements of $K$.  The following conditions are equivalent:
\begin{enumerate}
\item There exists a finite Galois extension $L/K$ and Galois conjugate elements $\theta_1, \ldots, \theta_r$ of $L$ satisfying $\sum a_i \theta_i = 0$.
\item There exists a probability distribution $(X_1, \ldots, X_r)$ on $K^r$, supported on a finite subset of the hyperplane $\sum a_i x_i = 0$, such that the marginals $X_i$ are all equal in distribution.
\item There exists an $N$ and a set of $r$ permutation matrices $\Pi_1, \ldots, \Pi_r \in S_N$ such that $\det(\sum a_i \Pi_i) = 0$.
\end{enumerate}
\label{pr:smythcriterion}
\end{prop}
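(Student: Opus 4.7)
The plan is to run the cycle $(1) \Rightarrow (2) \Rightarrow (3) \Rightarrow (1)$, with the group algebra $K[\Gal(L/K)]$ serving as the bridge between algebraic numbers and permutation-matrix combinatorics. In each implication I assume the witness is nontrivial (e.g.\ $\theta \ne 0$, the distribution is not the point mass at the origin, the null vector of $\sum a_i \Pi_i$ is nonzero), since the degenerate case is vacuous once interpreted in a reduced form.

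For $(1) \Rightarrow (2)$ I would apply the averaging trick: let $G = \Gal(L/K)$, let $g$ be uniformly random in $G$, and set $X_i := g(\theta_i)$. Because the $\theta_i$ lie in a single $G$-orbit, each $X_i$ is uniformly distributed on that orbit independently of $i$, so the marginals agree; and since $g$ fixes the $a_i \in K$, the identity $\sum a_i X_i = g(\sum a_i \theta_i) = 0$ holds deterministically, so the support lies in the hyperplane. For $(2) \Rightarrow (3)$ I would clear denominators so that the distribution is uniform on a multiset of $N$ tuples, and arrange these tuples as the columns of an $r \times N$ matrix $A$; the equal-marginals condition says that each row of $A$ is the first row permuted, so one can choose permutation matrices $\Pi_i$ with $(\text{row }i) = (\text{row }1)\,\Pi_i$. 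The hyperplane condition on columns then reads $(\text{row }1)\,(\sum a_i \Pi_i) = 0$, and since row $1$ is nonzero, $\sum a_i \Pi_i$ must be singular.

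For $(3) \Rightarrow (1)$ I would realize $S_N$ as $\Gal(L/K)$ for some finite Galois extension of $K$ (possible over every global field by Hilbert irreducibility) and appeal to the normal basis theorem to identify $L \cong K[S_N]$ as left $K[S_N]$-modules. Let $\sigma_i$ be the permutation whose matrix is $\Pi_i$, and set $\alpha = \sum a_i \sigma_i \in K[S_N]$. The hypothesis $\det(\sum a_i \Pi_i) = 0$ says that $\alpha$ acts non-invertibly on the natural permutation representation $K^N$, so $\alpha$ is not a unit in the finite-dimensional algebra $K[S_N]$; but every non-unit in a finite-dimensional $K$-algebra is a left zero-divisor, hence there exists $\theta \in L \setminus \{0\}$ with $\alpha\theta = 0$, i.e.\ $\sum a_i \sigma_i(\theta) = 0$. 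Then the elements $\theta_i := \sigma_i(\theta)$ are Galois conjugates of $\theta$ satisfying the desired relation.

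The main obstacle is the $(3) \Rightarrow (1)$ step, which rests on two ingredients: inverse Galois for $S_N$ over every global field (guaranteed by the Hilbertian property of $K$) and a characteristic-free treatment, since in positive characteristic $K[S_N]$ need not be semisimple when $p \mid N!$. The proof above circumvents the latter point by invoking only the normal basis theorem (which holds unconditionally) and the elementary fact that non-units in finite-dimensional algebras are zero-divisors, bypassing any appeal to Maschke's theorem.
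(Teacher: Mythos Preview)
Your cycle is largely sound, and your $(3)\Rightarrow(1)$ via the normal basis theorem is a genuinely nicer route than the paper's: the paper instead proves $(2)\Rightarrow(1)$ by an explicit construction $\theta_i = \sum_j c_{ij}\gamma_j$ inside an $S_d$-extension, whereas your argument that a non-unit in the finite-dimensional algebra $K[S_N]\cong L$ is a left zero-divisor is more conceptual and, as you correctly observe, characteristic-free without any appeal to semisimplicity.

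That said, there are two gaps in the other two steps. In your $(1)\Rightarrow(2)$, the random vector $(g(\theta_1),\ldots,g(\theta_r))$ takes values in $L^r$, not in $K^r$, so it does not witness condition~(2) as stated. The repair is the one the paper carries out in its $(1)\Rightarrow(3)$ step: fix a $K$-basis $v_1,\ldots,v_d$ of $L$, write $\theta_i = \sum_j c_{ij}v_j$ with $c_{ij}\in K$, and note that the relation $\sum_i a_i\theta_i = 0$ forces $\sum_i a_i c_{ij} = 0$ for each $j$; one then works with the $K$-valued coefficient vectors rather than with the $L$-valued $g(\theta_i)$ directly. (Alternatively, your $L$-valued version suffices to reach (3), since $\sum a_i\Pi_i$ has entries in $K$ and singularity is insensitive to field extension; but then you have not literally established (2) along the way.)

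In your $(2)\Rightarrow(3)$, ``clear denominators'' presupposes that the probability masses are rational, which is not part of the hypothesis. The paper handles this in its $(2)\Rightarrow(1)$ step: the finitely-supported distributions on the given support with equal marginals form a polytope cut out by $\Q$-linear conditions inside $\R^{|S|}$, so a rational point can be found arbitrarily close to any real solution. You need this same rational-approximation step before your multiset-of-$N$-tuples argument goes through.
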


\begin{proof}
    $(1) \Rightarrow (3)$: Without loss of generality, we will assume that $\theta_1, \dots, \theta_r$ comprise a complete Galois orbit, as if not, we can extend the argument to the full orbit, setting additional $a_i$ equal to 0. 
    
    Let $v_1, \dots, v_d$ be a linear basis for $L$ over $K$. Then there are coefficients $c_{ij} \in K$ such that $\theta_i = \sum_{j=1}^d c_{ij} v_j$. From the hypothesized linear relation, we have
    \begin{align*}
        \sum_{i=1}^r a_i \theta_i &= 0 \\ \sum_{i=1}^r a_i \Big(\sum_{j=1}^d c_{ij} v_j \Big) &= 0 \\ \sum_{i=1}^r \sum_{j=1}^d a_i c_{ij} v_j &= 0
    \end{align*}
    By linear independence of the $v_j$'s, it follows that each coefficient, $\sum_{i=1}^r a_i c_{ij} = 0$. 
    
Additionally, applying any $\sigma \in \Gal(L/K)$ to the assumed equation, we have $\sum_{i=1}^r a_i \sigma \theta_i = \sum_{i=1}^r a_i \theta_{\sigma(i)} = 0$, where we have identified $\sigma$ with its permutation action on the subscripts of $\theta_1, \dots, \theta_r$. Consequently, we have $\sum_{i=1}^r a_i c_{\sigma(i)j} = 0$ for all $\sigma \in G:=\Gal(L/K)$. 

Define the matrix $C:= (c_{ij})_{i \in [r], j \in [d]}$ and fix an enumeration of the elements of $G = \{\sigma_1, \dots, \sigma_s\}$. Let $\sigma C$ be the matrix $(c_{\sigma(i)j})_{i \in [r], j \in [d]}$ for any $\sigma \in G$. Now consider the block $sd \times r$ matrix $D$ constructed by vertically stacking $\sigma_1 C, \dots, \sigma_s C$. Since $G$ acts transitively on $\theta_1, \ldots, \theta_r$, the columns of $D$ lie in the same permutation orbit. And as already demonstrated, the row vectors of $D$ all lie in the hyperplane $\sum a_i x_i = 0$. As a result, we can let $N:= sd$, $v \in \R^N$ be any column of $D$, and then choose permutation matrices $\Pi_1, \dots, \Pi_r$ such that $\Pi_i v$ is the $i^{th}$ column of $D$. It then follows that $v \in \ker(\sum a_i \Pi_i)$, which implies the desired conclusion.

$(2) \Rightarrow (1)$: Write $S$ for the finite subset of $K^r$ on which the hypothesized distribution is supported.  The space of probability distributions supported on $S$ can be identified with the space of vectors in $\R^{|S|}$ with nonnegative coordinates summing to zero, and the condition that the marginals are all equal in distribution is a set of linear conditions on this space.  So the existence of the distribution $(X_1, \ldots, X_r)$ is equivalent to the statement that a certain affine linear subspace $L$ of $\R^{|S|}$ contains a vector $v$ with all coordinates nonnegative.  Since $L$ is defined over $\Q$, its rational points are dense in its real points. Choosing a rational point sufficiently close to $v$, we have a joint distribution $(X_1, \ldots, X_n)$ supported on $S$ which has all marginals equal and which furthermore assigns rational probability to each point in $S$.

  Having done so, we may take $C = \{c_{ij}\}_{1 \leq i \leq r, 1 \leq j \leq d}$ to be a multiset containing each point in $S$ with multiplicity proportionate to the probability that $(X_1, \dots, X_r)$ takes that value. Then the marginals $X_i$ all being equal in distribution means that the multisets $\{c_{ij}\}_{j=1}^d$ are independent of $i$. Since $K$ is a global field, there exists an $S_d$-extension $L$ of $K$.  Let $L_1$ be the fixed field of $S_{d-1}$ acting on $L$, let $\gamma_1$ be an algebraic generator for $L_1$, and let $\gamma_1, \dots, \gamma_d$ be the orbit of $\gamma_1$ under $S_d$, so that $S_d$ acts on this set of conjugates by its usual action on a set of $d$ letters.  Now define $\theta_i:= \sum_{j=1}^d c_{ij} \gamma_j$. By construction, the coefficient vectors $(c_{ij})_{j=1}^d \in K^d$ lie in the same permutation orbit, and so $\theta_1, \dots, \theta_r$ are Galois conjugates. Moreover, since $(c_{1j}, \dots, c_{rj}) \in K^r$ lies in the hyperplane $\sum a_i x_i =0$ for all $j$, we have
\begin{align*}
    \sum_{i=1}^r a_i \theta_i &= \sum_{i=1}^r a_i \Big(\sum_{j=1}^d c_{ij} \gamma_j\Big) \\ &= \sum_{j=1}^d \sum_{i=1}^r (a_i c_{ij}) \gamma_j \\ &=0,
\end{align*}
since each coefficient $\sum_{i=1}^r (a_i c_{ij}) =0$, by construction of the $c_{ij}$. 

$(3) \Rightarrow (2)$: Since the linear system is defined over $K$, there exists a $K$-valued vector $v \in \ker(\sum a_i \Pi_i)$. Let $X = (X_1, \dots, X_r)$ be the probability distribution on $K^r$ which draws a row uniformly at random from the $N \times r$ matrix $\Big[ \Pi_1 v \; | \dots | \; \Pi_r v \Big]$. The assumed equation $\sum a_i \Pi_i v = 0$ implies that $X$ is (finitely) supported on the hyperplane $\sum a_i x_i =0$. And because the $\Pi_i$ are permutations, each marginal distribution $X_i$ is a uniform draw from the coordinates of $v$.
\end{proof}

We now record the statement of Smyth's conjecture.
\begin{thm}
    Let $a_1, \ldots, a_r$ be a set of elements of $\Q$. There exists a finite Galois extension $L/\Q$ and Galois conjugate elements $\theta_1, \ldots, \theta_r$ of $L$ satisfying $\sum a_i \theta_i = 0$ if and only if the following local conditions hold:
\begin{itemize}
\item For every prime $p$ of $\Q$
\beq
|a_i|_p \leq \max_{j \neq i} |a_j|_p
\eeq
for every $i$.
\item In the real absolute value,
\beq
|a_i| \leq \sum_{j \neq i} |a_j|
\eeq
for every $i$.
\end{itemize}
    \label{th:smyth}
\end{thm}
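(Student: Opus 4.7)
The plan is to deduce the statement immediately from Theorem~\ref{th:main} together with Proposition~\ref{pr:smythcriterion}; no additional combinatorial or number-theoretic work is required here, as all the heavy lifting has been done in those two earlier results.

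Concretely, I would take the following two steps. First, I would apply the equivalence $(1) \Leftrightarrow (2)$ of Proposition~\ref{pr:smythcriterion} to the global field $K = \Q$. This converts the Galois-theoretic side of Smyth's conjecture -- the existence of a finite Galois extension $L/\Q$ together with Galois-conjugate $\theta_1, \ldots, \theta_r \in L$ satisfying $\sum a_i \theta_i = 0$ -- into the purely probabilistic statement that there exists a finitely supported probability distribution $(X_1, \ldots, X_r)$ on $\Q^r$ whose support lies in the hyperplane $\sum a_i x_i = 0$ and whose marginals $X_i$ are all equal in law.

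Second, I would invoke Theorem~\ref{th:main} directly, which asserts that such a distribution exists if and only if the stated $p$-adic and archimedean inequalities on the $a_i$ hold. Since Theorem~\ref{th:main} is phrased with precisely the same local conditions that appear in Smyth's original conjecture, no further manipulation of the inequalities is needed; chaining the two equivalences gives the desired biconditional.

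In particular, there is no real obstacle to overcome at this stage: all of the combinatorial difficulty is absorbed into the proof of Theorem~\ref{th:main}, and the Galois-to-probability translation has already been carried out in Proposition~\ref{pr:smythcriterion}. The only genuine ``steps'' in the proof of Theorem~\ref{th:smyth} itself are the invocations of these two results, with the forward direction (necessity of the local conditions) and the reverse direction (sufficiency) both handled uniformly by Theorem~\ref{th:main}.
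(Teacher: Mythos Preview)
Your proposal is correct and matches the paper's own proof essentially verbatim: the paper's proof is the single sentence ``This is immediate from Theorem~\ref{th:main} and the equivalence of (1) and (2) in Proposition~\ref{pr:smythcriterion}.'' You have simply unpacked this sentence into its two constituent invocations.
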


\begin{proof}
This is immediate from Theorem~\ref{th:main} and the equivalence of (1) and (2) in Proposition~\ref{pr:smythcriterion}.
\end{proof}

\begin{rem}
    It is natural to wonder whether a certain linear relation among Galois conjugates is still possible if we impose constraints on the Galois group.  The methods used here do not seem to have much to say about this case.
\end{rem}

\begin{rem} The main theorem of this paper tells us which integers can appear as an eigenvalue of a linear combination $\sum_{i=1}^n a_i \Pi_i$ of permutation matrices with specified integer coefficients $a_i$.  Proving the theorem for arbitrary number fields would give a characterization of all algebraic integers which could be eigenvalues of such a linear combination.  In the case of the sum of two permutations (i.e.  $a_1 = a_2 = 1$) this was carried out by Speyer in \cite{speyer:moanswer}. Combining this remark with the previous one, one might ask:  if $G$ is a finite group and $a_1, \ldots, a_n \in \Z$ a list of coefficients, we may consider the set of elements of the group ring $\Z[G]$ of the form $\sum a_i g_i$ with $g_i \in G$.  Each such element can be considered as a linear endomorphism of $\Z[G]$ and as such has a set of eigenvalues.  For any fixed group $G$, this yields a finite set of algebraic integers.  What can we say about the union of this set as $G$ ranges over all groups in a class of interest?  The main theorem of the present paper shows that, if $G$ is allowed to range over all symmetric groups (or, equivalently, over all finite groups), then this set of eigenvalues includes all integers which satisfy Smyth's local triangle inequalities when appended to $a_1, \ldots, a_n$.  Yet another way of interpreting our theorem is as a statement about group rings of free groups.  If $F_2$ is freely generated by elements of $e_1$ and $e_2$, for instance, our result characterizes those elements $x = a+be_1+ce_2$ of $\Q[F_2]$ which become noninvertible in $\Q[G]$ for some finite quotient $G$ of $F_2$.  What if we ask about more complicated elements of the group ring, like $a + be_1 + ce_2 + d e_1 e_2$?  What if we ask this question where the free group is replaced by other discrete groups of interest?  This begins to resemble questions that arise in topology, where $M$ is a manifold with fundamental group $\pi$, and the rational homology of a finite cover of $M$ with Galois group $G$ is described in terms of some exact sequence of $\Q[G]$-modules whose coefficients are the projections to $\Q[G]$ of fixed elements of $\Q[\pi]$. In situations like this one often wants to know where there exists a finite $G$ where the rank of this homology group does something non-generic.
\end{rem}

\begin{rem} While we concentrate on additive relations among Galois conjugates here, Smyth shows (when $K=\Q$) that the conditions in Proposition \ref{pr:smythcriterion} are also equivalent to the existence of a multiplicative relation $\prod_i \theta_i^{a_i}$ among Galois conjugates.  It would be interesting to investigate what one can say about, e.g., multiplicative relations among Weil numbers, which in turn provide additive relations among zeroes of L-functions over finite fields.  See \cite{angleranks} for some recent work in this direction.
\end{rem}

\subsection{Outline of the proof}
In the remainder of this section, we explain how Smyth's conjecture can be seen as a statement of a ``non-deterministic local-to-global principle."  The proof could be carried out in full without introducing this notion, but we found this viewpoint to be decisive in allowing us to develop an approach to the problem.  In section~\ref{s:local}, we show that Smyth's conditions guarantee local solubility of the Smyth problem -- here local is meant in the non-deterministic sense introduced here, and does {\em not} have anything to do with Galois conjugates in finite extensions of local fields!  In section~\ref{s:ffl2g}, we prove Smyth's conjecture over function fields using an algebro-geometric argument; in this setting, we show that one can very directly piece together local solutions into a global solution.  In section~\ref{s:approxl2g}, we copy this approach over $\Q$, and find that, thanks to our eternal nemesis the archimedean place, the local solutions piece together only into an {\em approximate} global solution; in the combinatorial language, we find a matrix whose rows satisfy the desired equation and whose columns are {\em almost} permutations of one another.

\begin{figure}[h]
    \centering
    \includegraphics[width=0.8\textwidth]
    {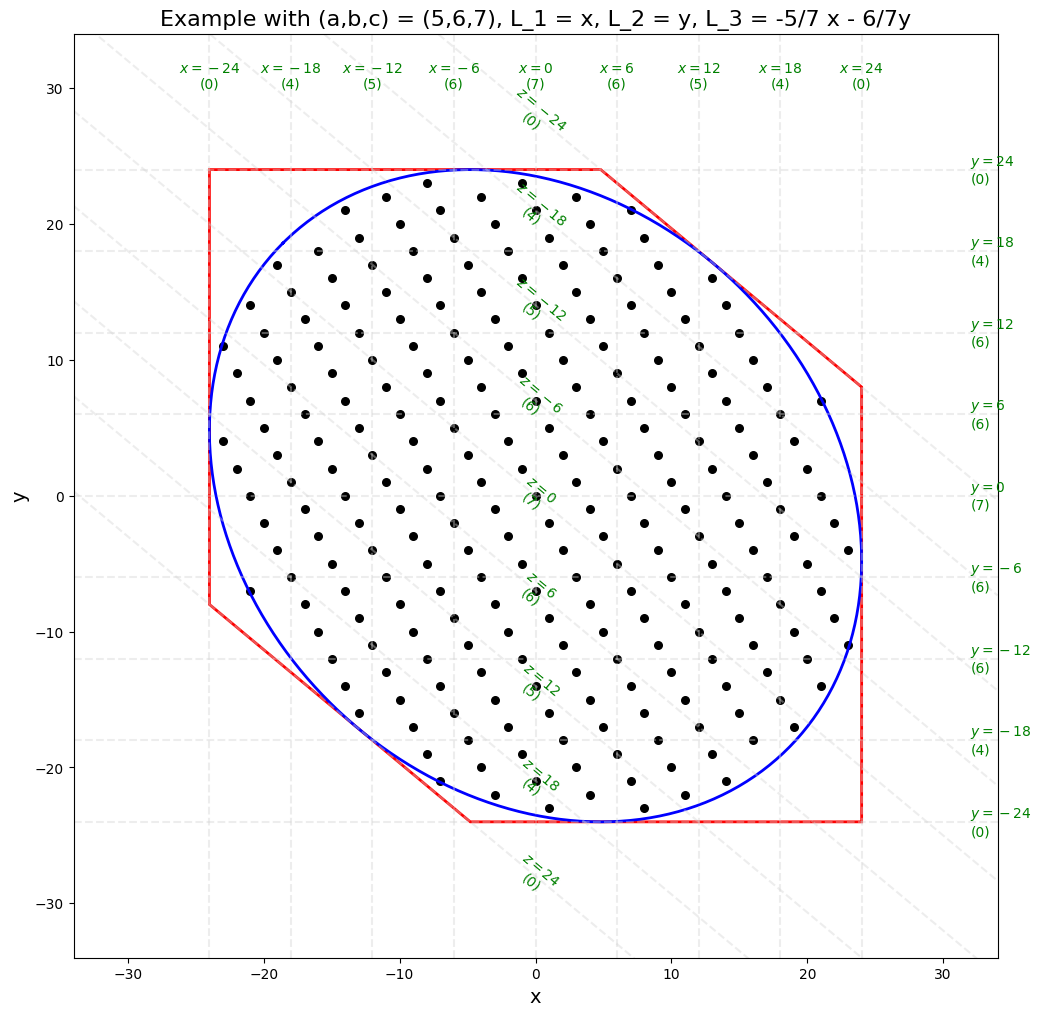}
    \caption{An approximate solution for $5L_1+6L_2+7L_3=0$}
    \label{fig:ellipse}
\end{figure}

Figure~\ref{fig:ellipse} depicts the nature of the construction.  We have three linear forms $L_1(x,y) = x, L_2(x,y) = y,$ and $L_3(x,y) = -(5/7)x - (6/7)y$ on the plane.  These linear forms satisfy the relation $5L_1 + 6L_2 + 7L_y = 0$. So each of the lattice points $\lambda$ in the inscribed ellipse corresponds to a triple $(L_1(\lambda), L_2(\lambda), L_3(\lambda))$ satisfying the same relation.  These triples form the rows of our matrix. 
 For the columns to be permutations of each other, we would need the multisets $L_1(\lambda), L_2(\lambda), L_3(\lambda)$ to agree.  But they do not -- for instance, as the diagram shows, there are $6$ lattice points in the ellipse with $L_2(\lambda) = 12$ but only $5$ with $L_1(\lambda) = 12$.  However, they {\em almost} agree; in fact, these numbers never differ by more than $1$.  The main result of this section, Proposition~\ref{pr:approxbalanced}, gives precise error bounds for this approximate solution.

 The final part of the paper is the longest, and is primarily additive-combinatorial in nature.  We recast the problem as one of finding a {\em balanced weighting} on the (hyper)edges of a finite hypergraph.  In the non-hyper case of directed graphs, this comes down to finding subgraphs of a directed graph in which every vertex has in-degree and out-degree equal, a well-understood problem.  The analogous problem for hypergraphs appears to be more subtle; but in this case, we are able to show that an approximately balanced weighting (the output of Proposition~\ref{pr:approxbalanced}) can always be perturbed to an actually balanced weighting, which gives the desired result.

\subsection{Non-deterministic local-to-global principles}

As we explained in the previous section, Smyth reduced the problem of studying additive and multiplicative linear relations among Galois conjugates to a question about the existence of certain finitely supported probability distributions on $\Q$.  In this section, we explain how this can be recast in terms of a novel class of Diophantine problems, which we call non-deterministic systems of equations.

By a {\em non-deterministic system of equations} over a ring $R$ we mean a collection of assertions about a joint distribution on $R$-valued random variables $X_1, \ldots, X_n$ which take one of the following two forms:
\begin{itemize}
\item $F(X_1, \ldots, X_n)$ and $G(X_1, \ldots, X_n)$ are equal in distribution with $F$, $G$ polynomials in $R[x_1,\ldots,x_n]$.  We denote such a relation $F(X_1, \ldots, X_n) \eqd G(X_1, \ldots, X_n)$.
\item $F(X_1, \ldots, X_n) \eqd Y$ for some specified random variable $Y$.
\end{itemize}

We say a non-deterministic system of equations is {\em homogeneous} if the polynomials $F$ and $G$ above are homogeneous and if all equations of the form $F(X_1, \ldots, X_n) \eqd Y$ have $Y=0$ (by which we mean the $Y$ is a random variable which is $0$ with probability one.) 

In the present paper, $R$ will always be a topological ring (though this topology might be the discrete topology!)  By a {\em solution} to a non-deterministic system of equations over $R$ we mean a joint distribution satisfying all the assertions and which has {\em compact support.}  (In case the topology on $R$ is discrete, this means finite support.)

If $X_1,\ldots, X_n$ is a solution to a homogeneous non-deterministic system of equations, then so is $\lambda(X_1, \ldots, X_n)$ for any $\lambda \in R$, and in particular the atomic distribution supported at the origin; we will typically neglect this trivial solution and, by analogy with the usual treatment of equations in projective space, understand a solution to a homogeneous non-deterministic system to mean a nonzero solution up to scaling.

\begin{exmp}
We present a few examples to help explain the scope of the definition.
\begin{itemize}
\item  The homogeneous system $X \eqd Y+Z, X+Y-Z=0$ has a solution in the traditional sense given by the point $(1:0:1) \in \P^2(R)$; thus it has a solution as a non-deterministic system by any joint distribution in which $X=Z$ and $Y=0$ with probability $1$.  More generally, any system of equations with a deterministic solution has a non-deterministic one given by an atomic distribution supported at the deterministic solution.
\item The equation $X \eqd -X$ is a homogeneous non-deterministic equation in one variable.  Any symmetric random variable $X$ is a solution, but there is no nontrivial deterministic solution.
\item The condition of compact support might seem at first to be somewhat arbitrary.  We impose it, first of all, because it is essential in the application to Smyth's conjecture. Relaxing the condition of compact support makes many more equations solvable, perhaps too many. 
 For instance, if $X$ and $Y$ are independent draws from a standard L\'{e}vy distribution, then $Z = (1/4)(X+Y)$ is equal in distribution to $X$ and $Y$; so the equation $X \eqd Y \eqd Z, X+Y-4Z = 0$ is solvable over $\R$ in general distributions but not in compactly supported distributions (consider the variance of $Z$, or its maximum and minimum.)

\item If $U$ is a random variable uniformly distributed on $[0,1]$, solutions to the non-homogeneous system $X_1 \eqd X_2 \eqd \ldots \eqd X_n \eqd U$ are called {\em copulas}.  There has been some interest in the probability literature in copulas with low-dimensional support; in some sense, the probability distributions in this paper, which are joint distributions on $n$ variables supported on a linear subspace, are in the same vein.

\end{itemize}
\end{exmp}

Smyth's conjecture can then be rephrased as the assertion that, under certain local conditions on $a_1, \ldots, a_n \in \Q$, the homogeneous non-deterministic system of equations
\begin{equation}
X_1 \eqd \ldots \eqd X_n, \sum a_i X_i = 0
\label{eq:ndsmyth}
\end{equation}
has a nontrivial solution over $\Q$.  (To be more precise: we need a nontrivial solution which has finite support.)  But the fact that the conditions are local should be suggestive.  In fact, the nature of the proof of Theorem~\ref{th:main} is as follows:  we first prove that the local condition at $v$ guarantees that \eqref{eq:ndsmyth} has a solution over $\Q_v$.  Just as in the deterministic situation, the study of local solutions is much easier than that of global solutions.  We then show that the existence of solutions to \eqref{eq:ndsmyth} over every completion $\Q_v$ implies the existence of a solution over $\Q$.  In other words, we prove a local-to-global principle for \eqref{eq:ndsmyth}.

This raises the natural question:  what types of nondeterministic equations admit a local-to-global principle?  In the classical Diophantine setting this is a rich problem, with many questions remaining the subject of active research even today.  In the nondeterministic setting, even the case of {\em linear} systems of equations, as treated in this paper, have real content.  In particular, we do not know the answer to the following question.

\begin{question}
Let $\set{L_{i;j}}$ be a colletion of linear forms in variables $X_1, \ldots X_r$ over a global field $K$.  Under what additional hypotheses can one guarantee that a system of linear non-deterministic equations
\begin{eqnarray*}
L_{1;1} \eqd L_{2;1} \eqd \ldots \eqd L_{n_1;1} \\
L_{1;2} \eqd L_{2;2} \eqd \ldots \eqd L_{n_2;2} \\
\ldots \\
L_{1;m} \eqd L_{2;m} \eqd \ldots \eqd L_{n_m;m} \\
\end{eqnarray*}
has a compactly (i.e. finitely) supported solution over $K$ if and only if it has a compactly supported solution over $K_v$ for all places $v$?
\end{question}

 Our main result, Theorem~\ref{th:main}, which is essentially equivalent to Smyth's conjecture, is that this local-to-global principle holds when $m=1$, $K=\Q$ and $r=n-1$.  This involves a change of variables: choosing a basis for the hyperplane $\sum a_i x_i = 0 \in \Q^n$ is the same thing as choosing $n$ linear forms $L_1, \ldots, L_n$ on $\Q^{n-1}$ satisfying the relation $\sum a_i L_i = 0$.  Furthermore, the condition that a distribution supported on the hyperplane has all marginals equal is equivalent to the condition that the corresponding distribution on $\Q^{n-1}$ satisfies $L_1 \eqd \ldots \eqd L_n$.  This is just another way of saying that the non-deterministic equation $L_1 \eqd \ldots \eqd L_n$ on $\Q^{n-1}$ is equivalent to the system of non-deterministic equations $X_1 \eqd \ldots \eqd X_n, \sum a_i X_i \eqd 0$ on $\Q^n$.  In the last equation, the $\eqd$ is optional; to require a linear form to be equal in distribution to the atomic variable $0$ is just to constrain that linear form to be $0$ (up to probability zero events, which do not concern us.)
 
 We will also show in Theorem~\ref{th:ffl2g} in the following section that this local-to-global principle always holds when $m=1$ and $K$ is the function field of a smooth proper curve over a finite field.  

There are many further questions we can ask, even about linear systems:
\begin{itemize}
\item Can one go beyond the Hasse principle and ask about weak approximation; that is, are the global solutions in some sense dense, or even in some sense equidistributed, in the adelic solutions?
\item One can imagine a broader notion of nondeterministic system of equations:  what if we require some subsets of the variables to be independent?  Or what if we allow equations enforcing equality of joint distributions of two different subsets of variables? 
\item Of course, one might also ask about non-deterministic equations of higher degree.  We have very little idea of what to expect in this case.
\end{itemize}

\begin{rem} There is not \textit{always} a local to global principle for systems of linear equations, as the following example shows; we learned this example in a related context from David Speyer in \cite{speyer:moanswer}.  Let $\alpha$ be an integer in a quadratic imaginary field $K$ which has complex magnitude $2$ and is not twice a root of unity -- for instance, we can take $\alpha = (1/2)(3+\sqrt{-7})$ -- and consider the equations
\beq
X \eqd Y \eqd Z, X+Y-\alpha Z = 0
\eeq
or, equivalently,
\beq
X \eqd Y \eqd \alpha^{-1}(X+Y).
\eeq

For any non-archimedean completion $K_v$ of $K$, this system is solved by the joint distribution where $X$ and $Z$ are independently drawn from $\OO_{K_v}$ and $Y$ is given by $\alpha Z - X$.  And over $\C$, we may take $X$ to be drawn uniformly from the unit circle, $Y$ to be $X$, and $Z$ to be $2 \alpha^{-1} X$.  On the other hand, there is no solution over $K$, as we now show.  We note first that any complex solution (and thus any solution over $K$) has $X = Y$.  For
\beq
4 \EE Z\bar{Z} = |\alpha^2| \EE Z\bar{Z} = \EE (X+Y)(\bar{X}+\bar{Y}) = \EE X\bar{X} + \EE Y\bar{Y} + 2 \EE \Rea X \bar{Y}.
\eeq
But by the hypothesis that $X \eqd Y \eqd Z$, we have $\EE X \bar{X} = \EE Y \bar Y = \EE Z \bar Z$, so we have
\beq
\EE X \bar{X} = \EE \Rea X \bar{Y}.
\eeq
On the other hand,
\beq
\EE (X-Y)(\bar{X} - \bar{Y}) = 2(\EE X \bar{X} - \EE \Rea X \bar{Y}) = 0
\eeq
But the left-hand side is the expected squared magnitude of $X-Y$, so it can be $0$ if and only if $X-Y$ is $0$ with probability $1$.

  But then our equations reduce to a requirement that $X$ and $Z = (2/\alpha)X$ be equal in distribution, which means in particular that the finite support of $X$ is invariant under multiplication by $2/\alpha$, which is impossible since $2/\alpha$ has infinite order by hypothesis.

This example is clearly in some sense a ``boundary case," where the local conditions at $\C$ enforce a further algebraic condition $X=Y$ on the distribution.  This suggests that there might be a general nondeterministic local-to-global principle for linear systems of equations subject to some special hypotheses involving roots of unity; that these issues arise in the boundary cases of Smyth's conjecture over number fields  is already observed in \cite{HY22}.
\end{rem}

\subsection{Acknowledgments}
The authors are very grateful for many useful conversations about Smyth's conjectures over the years, with Jen Berg, Bobby Grizzard, Timo Sepp\"{a}l\"{a}inen, David Speyer, Betsy Stovall, Benny Sudakov, Sameera Vemulapalli, and John Yin, among others.  The first author's research is supported by NSF grant DMS-2301386 and by the Office of the Vice Chancellor for Research and
Graduate Education at the University of Wisconsin–Madison with funding from the Wisconsin Alumni Research Foundation. Some of the work in this paper was carried out while the first author was visiting the Simons Laufer Mathematical Sciences Institute.

\section{Non-deterministic solutions over local fields}

\label{s:local}

For the rest of this paper, we restrict our attention to homogeneous non-deterministic systems of linear equations of the form
\begin{equation}
L_1(X_1, \ldots, X_r) \eqd \ldots \eqd L_n(X_1, \ldots X_r)
\label{l1eqdln}
\end{equation}
over a local or global field $K$, with $r < n$.

We begin by distinguishing a certain distinguished class of non-deterministic equations over local fields.

\begin{defn} If $v$ is a place of $K$, we define an {\em ellipsoid} in $K_v^r$ to be:
\begin{itemize}
\item A lattice in $K_v^r$ if $v$ is non-archimedean (that is, a finitely generated $\OO_{K_v}$-submodule of $K_v^r$ which spans $K_v^r$ as a $K_v$-vector space;)
\item A set of the form $\set{x: \lVert x \rVert < M}$ where $M > 0$ is a real number and $\lVert \rVert$ is a Hermitian norm on $K_v^r$, when $v$ is a real or complex place.
\end{itemize}

If $\Omega$ is an ellipsoid in $K_v^r$, we define the {\em unitary group} $U(\Omega) \subset \GL_r(K_v)$ to be the stabilizer of $\Omega$, which is a compact subgroup.  When $v$  is non-archimedean, $U(\Omega)$ is commensurable with $\GL_r(\OO_{K_v})$.  When $v$ is archimedean, $U(\Omega)$ is the group of linear transformations preserving $\lVert \rVert_v$, which is an orthogonal group if $v$ is real and a unitary group in the usual sense if $v$ is complex.

We say a non-deterministic system of equations \eqref{l1eqdln} over $K_v$ is {\em ellipsoidal} if there exists an ellipsoid $\Omega$ such that $L_1, \ldots, L_n$ lie in a single orbit of $U(\Omega)$.  (Here, $U(\Omega)$ is acting on the $L_i$ by change of coordinates; in other words, we are using the dual action to the action of $U(\Omega)$ on $K_v^r$.)

\end{defn}

In the case $r=n-1$, one can explicitly describe necessary and sufficient conditions for the existence of local solutions to \eqref{l1eqdln}.  If $a_1, \ldots, a_n$ are elements of a local field $K_v$, we say that $a_1, \ldots, a_n$ {\em satisfy the triangle inequality} if, for all $i$,
\beq
|a_i|_v \leq \max_{j \neq i} |a_j|_v
\eeq
(when $v$ is non-archimedean) or
\beq
|a_i|_v \leq \sum_{j \neq i} |a_j|_v.
\eeq
(when $v$ is archimedean.)

\begin{prop}  
Let $K_v$ be a local field and let $a_1, \ldots, a_n$ be a set of elements of $K_v$, and let $L_1, \ldots, L_n$ be a tuple of linear forms in $X_1,\ldots, X_{n-1}$ such that $\sum a_i L_i = 0$ and which satisfy no other linear relation.  The following are equivalent:
\begin{enumerate}
\item $a_1, \ldots a_n$ satisfy the triangle inequality;
\item The equation $L_1 \eqd \ldots \eqd L_n$ is ellipsoidal;
\item The equation $L_1 \eqd \ldots \eqd L_n$ has a solution which is uniform on an ellipsoid in $K_v^{n-1}$, and which, when $v$ is non-archimedean and the largest coefficient of each $L_i$ is a $v$-adic unit, is uniform on $\OO_{K_v}^{n-1}$.
\item The equation $L_1 \eqd \ldots \eqd L_n$ has a solution.
\end{enumerate}
\label{pr:localmain}
\end{prop}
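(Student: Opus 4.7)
The plan is to prove the cycle $(3) \Rightarrow (4) \Rightarrow (1) \Rightarrow (2) \Rightarrow (3)$. Two of these links are nearly automatic. $(3) \Rightarrow (4)$ is immediate from the definitions. For $(2) \Rightarrow (3)$, if the $L_i$ lie in a single $U(\Omega)$-orbit and $X$ is uniform on $\Omega$, then $uX$ is again uniform on $\Omega$ for any $u \in U(\Omega)$, so choosing $u_{ij} \in U(\Omega)$ with $L_j = L_i \circ u_{ij}$ yields $L_j(X) = L_i(u_{ij} X) \eqd L_i(X)$. In the non-archimedean case the ellipsoid constructed below will be $\OO_{K_v}^{n-1}$ precisely when each $L_i$ has a coefficient that is a $v$-adic unit, giving the strengthened form of $(3)$.

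For $(4) \Rightarrow (1)$, take a nontrivial compactly supported solution $X$. The no-extra-relation hypothesis makes $L_1,\dots,L_n$ span $(K_v^{n-1})^*$, so the common distribution of the $L_i(X)$ is not concentrated at $0$. Rewriting $a_i L_i = -\sum_{j \ne i} a_j L_j$ yields the pointwise bound $|a_i L_i(x)|_v \leq \max_{j \ne i}|a_j L_j(x)|_v$ in the non-archimedean case and $|a_i L_i(x)| \leq \sum_{j \ne i}|a_j L_j(x)|$ in the archimedean case, for every $x$ in the support. Evaluating the non-archimedean inequality at a support point that maximizes $|L_i(x)|_v$, and taking $\EE$ in the archimedean inequality, cancels the common positive value of $\sup_{\text{supp}} |L_i|_v$ (respectively $\EE|L_i|$) across $i$, recovering the triangle inequality on the $a_i$.

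The substantive content lies in $(1) \Rightarrow (2)$. In the non-archimedean case, reindex so $|a_n|_v$ is maximal and choose coordinates on $K_v^{n-1}$ in which $L_1,\dots,L_{n-1}$ are the coordinate functionals; then $L_n = -\sum_{i<n}(a_i/a_n)L_i$ is forced, and its coefficient vector has sup-norm $\max_{i<n}|a_i/a_n|_v = 1$ by the triangle inequality at $i=n$. Hence each $L_i$, viewed as a vector in $K_v^{n-1}$, has content ideal $(1)$, all such primitive vectors lie in one $\GL_{n-1}(\OO_{K_v})$-orbit, and $\Omega = \OO_{K_v}^{n-1}$ works. In the archimedean case it suffices to exhibit a positive definite Hermitian form $Q$ on $K_v^{n-1}$ under which $L_1,\dots,L_n$ have equal dual $Q$-norm, since $U(Q)$ acts transitively on the common sphere; dually, this amounts to producing unit vectors $v_1,\dots,v_n \in K_v^{n-1}$ (in the standard Hermitian norm) that span $K_v^{n-1}$ and satisfy $\sum a_i v_i = 0$. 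After absorbing signs or phases to assume $a_i > 0$, the classical fact that positive reals satisfying the polygon inequality are the side lengths of a closed polygon in $\R^2$ produces such unit vectors in $\R^2 \subset K_v^{n-1}$; a dimension count on the solution variety $\{(v_i) \in (S^{n-2})^n : \sum a_i v_i = 0\}$, whose non-spanning locus (vectors confined to a hyperplane) has positive codimension, then shows that a generic perturbation yields a spanning solution. The main obstacle is the boundary case of equality in the archimedean triangle inequality, where the polygon degenerates onto a line and the candidate Hermitian form becomes merely positive semidefinite; this case will require a separate argument, either by limiting from the strict case or by directly constructing the (necessarily lower-dimensional) solution demanded by $(4)$.
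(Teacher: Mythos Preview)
Your cycle $(3)\Rightarrow(4)\Rightarrow(1)\Rightarrow(2)\Rightarrow(3)$ matches the paper's, and the arguments for $(2)\Rightarrow(3)$ and $(3)\Rightarrow(4)$ are the same. Two steps differ in execution but not in spirit. For $(4)\Rightarrow(1)$ at an archimedean place you cancel the common value of $\EE|L_i(X)|$, whereas the paper cancels the essential supremum of $|L_i(X)|$ via a $\delta$-approximation; both are valid and yours is a bit cleaner. For the non-archimedean part of $(1)\Rightarrow(2)$ you give a direct argument via primitive vectors and transitivity of $\GL_{n-1}(\OO_{K_v})$ on them, while the paper instead proves a single ``converse triangle inequality'' lemma valid at all places, producing vectors $b_i$ of norm $|a_i|_v$ summing to zero, setting $x_i=b_i/a_i$, and transporting the standard norm along a linear map $x_i\mapsto L_i$. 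Your archimedean $(1)\Rightarrow(2)$ is this same idea stated dually.

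Your attention to the spanning requirement on the $v_i$ is well placed, and in fact addresses a point the paper glosses over: its lemma produces unit vectors $x_i$ with $\sum a_i x_i=0$ but does not check that they span $K_v^{n-1}$, yet the paper then invokes a ``linear isomorphism'' sending the $x_i$ to the $L_i$, which needs exactly that. Your perturbation/dimension-count repairs this in the strict-inequality regime. Your worry about the archimedean boundary case is also correct, and indeed sharper than you state: if $|a_1|=\sum_{j>1}|a_j|$ and some positive-definite dual form $P$ had all $P(L_i)$ equal, then equality in the triangle inequality for $\|\cdot\|_P$ would force $L_2,\dots,L_n$ to be positively proportional, contradicting the no-extra-relation hypothesis. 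So $(2)$ and $(3)$ genuinely fail in that case (while $(1)$ and $(4)$ still hold, via the obvious one-dimensional symmetric solution), and no separate argument can rescue $(1)\Rightarrow(2)$ there; the paper's proof shares this gap.
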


\begin{proof}
We begin with $(1) \Rightarrow (2)$, which is the most substantial part.

If $x$ is a vector in $K_v^{n-1}$, we denote by $||x||_v$ the standard Euclidean norm of $x$ when $v$ is real, the standard Hermitian norm when $v$ is complex, and the norm which sends $\ic{m}_v^r \OO_{K_v}^{n-1} \bs \ic{m}_v^{r+1} \OO_{K_v}^{n-1}$ to $|k_v|^{-r}$ when $v$ is non-archimedean.  Note that, for any $v$, we have that $||\lambda x||_v = |\lambda|_v ||x||_v$ for any $\lambda \in K_v$ and $x \in K_v^{n-1}.$

\begin{lemma}
    Let $K_v$ be a local field, and let $\ell_1, \ldots, \ell_m$ be elements of $\R$ lying in the image of the valuation of $K_v$. Furthermore, if $v$ is non-archimedean, suppose that for all $i$,
\beq
\ell_i \leq \max_{j \neq i} \ell_j
\eeq
and if $v$ is archimedean, suppose that, for all $i$, 
\beq
\ell_v \leq \sum_{j \neq i} \ell_v.
\eeq
    
    Let $d$ be an integer greater than $1$. Then there are vectors $b_1, \ldots, b_m$ in $K_v^d$ such that $||b_i||_v = \ell_i$ for all $i$ and $\sum_i b_i = 0$.
    \label{le:triangle}
\end{lemma}

\begin{proof}
    This is a generalized converse triangle inequality over an arbitrary local field, and surely must be standard, but not having been able to find a proof in the literature, we include one here.

    We argue by induction on $m$.  We first note that the cases $m=1$ and $m=2$ are easy, so we can start our induction with $m=3$ as the base case.
    
    We first consider the case where $v$ is archimedean.  The case $m=3$ is the usual (converse) triangle inequality which asserts that three real lengths $\ell_1, \ell_2, \ell_3$ satisfying the triangle inequality in fact are the edge lengths of some triangle in $\R^2$ (and a fortiori in $\C^2.$)  Now suppose $m$ is at least $4$.  We note that the given inequality can be written as $2 \max \ell_i \leq \sum \ell_i$.

    We may split $1, \ldots, m$ into two disjoint subsets $S$ and $T$, each of size at most $m-2$.  Write $\sum S$ for the sum of $\ell_i$ as $i$ ranges over $S$, and $\max S$ for the maximum of $\ell_i$ as $i$ ranges over $S$, and likewise for $T$.  We claim the intervals $[2 \max S - \sum S, \sum S]$ and $[2 \max T - \sum T, \sum T]$ intersect.  Suppose on the contrary they are disjoint.  Without loss of generality we may assume that $T$ is the set with the larger sum, and so $\sum S < 2 \max T - \sum T$.  But this means that $2 \max T > \sum S + \sum T$, the latter quantity being the sum of {\em all} the $\ell_i$, and this violates the hypothesis.  We conclude that the two intervals $[2 \max S - \sum S, \sum S]$ and $2 \max T - \sum T, \sum T]$ have nontrivial intersection, and indeed, since both intervals have nonnegative upper bound, there is a nonnegative real number $\ell$ contained in both intervals.  We denote the set $\set{\ell_i: i \in S} \cup \set{\ell}$ by $S \cup \ell$ for short. We claim that $S \cup \ell$ satisfies the generalized triangle inequality.  For
    \beq
    2 \max (S \cup \ell) - \sum (S \cup \ell)
    \eeq

    is either $2 \ell - \sum S - \ell = \ell - \sum S$, or $2 \max S - \sum S - \ell$.  Both are negative, since $\ell$ lies in $[2 \max S - \sum S, \sum S]$.  The same reasoning applies to $T \cup \ell$.  By applying the induction hypothesis to both $S \cup \ell$, there is choice of vectors $v_1, \ldots, v_m, v$ such that $||v_i|| = \ell_i, ||v|| = \ell, \sum_{\ell_i \in S} v_i = v$.  The same applies to $T$, with some other auxiliary vector $v'$ of length $\ell$; but applying a norm-preserving linear transformation to all $|T|+1$ vectors we can choose $v'=v$.  Now we have a set of vectors of the desired length indexed by $S$ and a set of vectors of the desired length indexed by $T$, and negating the latter yields the desired set of vectors for $\ell_1, \ldots, \ell_m$.

    In the non-archimedean case, an easier version of the same argument suffices.  Here, the generalized triangle inequality states that the maximum value of $\ell_i$ appears at least twice as $i$ ranges over $1, \ldots, m$. Again, $m=1,2$ is trivial and $m=3$ is standard and can be taken as the base case.  So suppose $m=4$.  We split $1,\ldots, m$ into disjoint subsets $S$ and $T$, each of size at most $m-2$, such that the maximum value $M$ of $\ell_i$ (which by hypothesis appears at least twice) appears at least once in $S$ and at least once in $T$.  Now the induction hypothesis tells us that there's a set of $|S|+1$ vectors summing to zero such that $||v_i|| = \ell_i$ for each $i \in S$ and the remaining vector $v$ has norm $M$.  The same holds with $S$ replaced by $T$, and just as in the non-archimedean case we can remove the auxiliary length $M$ vector, leaving us with the desired set of vectors $v_1, \ldots, v_m$ of the desired norms.
    \end{proof}

We now apply Lemma~\ref{le:triangle} with $\ell_i = |a_i|$.  Let $b_1,\ldots b_n$ be the vectors produced by Lemma~\ref{le:triangle} and write $x_i$ for $b_i/a_i$.  Then $\|x_i\|_v = 1$ for all $i$, and 
\beq
\sum_{i=1}^n a_i x_i = 0.
\eeq
This linear relation, and the fact that the $L_i$ satisfy no {\em other} relation than this one, imply that there exists a linear isomorphism $\gamma: K_v^{n-1} \ra K_v^{n-1;\vee}$ sending $b_i$ to $L_i$.  If $U$ is the stabilizer of the standard norm $\| \|_v$, then $b_1, \ldots, b_n$ lie in a single orbit of $U$.  This implies that $L_1, \ldots, L_n$ lie in a single orbit of $\gamma U \gamma^{-1}$, so $L_1 \eqd \ldots \eqd L_n$ is ellipsoidal, as claimed. 

$(2) \Rightarrow (3)$:  This implication does not use the running hypothesis $r=n-1$ so we include the proof for general $r < n$.  If \eqref{l1eqdln} is ellipsoidal over $K_v$, then a random variable $(X_1, \ldots, X_r)$ drawn uniformly from $\Omega_v$ is a solution to \eqref{l1eqdln} over $K_v$.  To see this, let $u_{i,j}$ be a transformation in $U(\Omega_v)$ taking $L_i$ to $L_j$; since the distribution of $(X_1, \ldots, X_{n-1})$ is invariant under $u_{i,j}$, the distribution of $L_j(X_1, \ldots, X_{n-1}) = L_i(u_{i,j}(X_1, \ldots, X_r))$ is the same as that of $L_i(X_1, \ldots, X_r)$.  If the largest coefficient of $L_i$ is a $v$-adic unit, and $X_1, \ldots, X_r$ is drawn uniformly from $\OO_{K_v}^r$, then $L_i(X_1,\ldots, X_r)$ is uniform in $\OO_{K_v}$, so indeed that choice of distribution suffices under that hypothesis on the $L_i$.

$(3) \Rightarrow (4)$: immediate.

$(4) \Rightarrow (1)$:  The necessity of $a_1, \ldots, a_n$ satisfying the triangle inequality was already observed by Smyth to be a necessary condition over $\Q$.  The proof in the local case is in essence the same, but we recall it here.   

Let $X_1, \ldots, X_{n-1}$ be a random variable satisfying $L_1 \eqd \ldots \eqd L_n$, and write $M$ for the maximum value of $|x|_v$ for $x$ in the support of $L_i(X_1, \ldots, X_{n-1})$.   To be more precise, ``maximum" has to be understood in the probabilistic sense: $M$ is the supremum of the set of real numbers $r$ such that $|L_i(X_1, \ldots, X_{n-1})|_v > r$ has positive probability.   Because the $L_i$ are all equal in distribution, $M$ does not depend on $i$.

Now suppose $v$ is archimedean and
\beq
|a_i| > \sum_{j \neq i} |a_j|.
\eeq
(The proof in the non-archimedean case is exactly the same, with $\max$ replacing $\sum$ everywhere, so we exclude it.)  Let $\delta > 0$ be a small real number; then, with positive probability, $|L_i| \geq M - \delta$.  But
\beq
0 = \sum a_i L_i = a_i L_i + \sum_{j \neq i} a_j L_j.
\eeq
The latter summand is bounded above by $M \sum_{j \neq i} |a_i|$ with probability $1$.  Making $\delta$ small enough that
\beq
 (M-\delta) |a_i| > M \sum_{j \neq i} |a_i|
\eeq
yields a contradiction.
\end{proof}

\section{The case of global function fields}

\label{s:ffl2g}

Before we proceed to the proof of our main theorem, we will show that a local-to-global principle holds for general systems of linear equations over function fields of curves over finite fields.  This proof is conceptually simple and shows that the real difficulties of the present paper are creatures of the archimedean places.  The argument here is very much indebted to that of \cite{HY22}, which proves Theorem~\ref{th:ffl2g} in the case $K = \F_q(t)$.

In what follows, we will refer to a ``solution" of a non-deterministic system of equations as shorthand for a ``compactly supported non-deterministic solution."

\begin{thm}
Suppose $K$ is the function field of a smooth proper curve over a finite field, and suppose
\begin{equation}
L_1(X_1, \ldots, X_r) \eqd \ldots \eqd L_n(X_1, \ldots, X_r)
\label{eq:lineq}
\end{equation}
is a non-deterministic system of linear equations over $K$, with $r < n$.  Then \eqref{eq:lineq} has a solution over $K$ if and only if it has a solution over the completion $K_v$ for every place $v$ of $K$.
\label{th:ffl2g}
\end{thm}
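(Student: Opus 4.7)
The ``only if'' direction is immediate by base change. For the converse, my strategy is to promote each local solution to the uniform distribution on a lattice, assemble these lattices into a vector bundle $\mathcal{E}$ of rank $r$ on the smooth proper curve $X$ whose function field is $K$, and then take the uniform distribution on a finite-dimensional space of global sections of an appropriate twist of $\mathcal{E}$.

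First, over every (non-archimedean) completion $K_v$ I would upgrade an arbitrary compactly supported solution to one which is uniform (in the sense of normalized Haar measure) on a lattice $\Omega_v \subset K_v^r$ for which $L_1, \ldots, L_n$ lie in a single orbit of $U(\Omega_v)$. The argument is a compactness-and-averaging variant of $(4) \Rightarrow (1) \Rightarrow (2)$ from Proposition~\ref{pr:localmain}, extended past its $r = n-1$ hypothesis: one averages the given solution over $U(\Omega)$ for a lattice $\Omega$ containing its support and trims down to the common $U$-orbit. The consequences I need are that $L_i(\Omega_v) = L_j(\Omega_v)$ as $\OO_{K_v}$-submodules of $K_v$ and that $\ker(L_i|_{\Omega_v})$ is $U(\Omega_v)$-conjugate to $\ker(L_j|_{\Omega_v})$.

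Second, at almost every place one can take $\Omega_v = \OO_{K_v}^r$, so $\{\Omega_v\}_v$ defines a rank-$r$ vector bundle $\mathcal{E}$ on $X$. The stalkwise equality $L_i(\Omega_v) = L_j(\Omega_v)$ says that the image of $L_i \colon \mathcal{E} \to \underline{K}$ is a single line bundle $\mathcal{L}$ independent of $i$, and the kernels $\mathcal{K}_i := \ker(L_i \colon \mathcal{E} \to \mathcal{L})$ are rank-$(r-1)$ subbundles isomorphic on every stalk; in particular they have the same rank and degree, hence the same Euler characteristic. Choose a divisor $D$ of sufficiently large degree that $H^1(X, \mathcal{K}_i(D)) = 0$ for every $i$. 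Then each $L_i \colon H^0(X, \mathcal{E}(D)) \twoheadrightarrow H^0(X, \mathcal{L}(D))$ is surjective with fibers torsors under $H^0(X, \mathcal{K}_i(D))$, and by Riemann--Roch on the curve $h^0(\mathcal{K}_i(D)) = \chi(\mathcal{K}_i(D))$ is a function only of the common rank and degree of the $\mathcal{K}_i$, hence independent of $i$. Drawing $(X_1, \ldots, X_r)$ uniformly from the finite $\F_q$-vector space $H^0(X, \mathcal{E}(D))$ therefore makes each $L_i(X_1, \ldots, X_r)$ uniform on $H^0(X, \mathcal{L}(D))$, giving a finitely supported solution to \eqref{eq:lineq} over $K$.

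The main obstacle is the first step: rigorously extending the implication $(4) \Rightarrow (2)$ in Proposition~\ref{pr:localmain} past the $r = n-1$ regime, i.e., producing a single lattice $\Omega_v$ realizing $L_1, \ldots, L_n$ as one $U(\Omega_v)$-orbit from the mere existence of a compactly supported local solution. Once that is in hand, the rest is standard algebro-geometric gluing together with a Riemann--Roch calculation, and crucially relies on $K$ having no archimedean places.
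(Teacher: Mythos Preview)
Your global strategy---package the local lattices into a vector bundle on the curve, twist until the higher cohomology of the kernels vanishes, and draw uniformly from global sections---is exactly the paper's.  The gap is in your first step, and it is one you yourself flag as the main obstacle.

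You do not need ellipsoidality.  The only fact required about the local lattice $\Lambda_v$ is that $L_i(\Lambda_v)$ is the same fractional ideal in $K_v$ for every $i$; this alone makes $\mathcal{L}=L_i(\mathcal{E})$ a well-defined line bundle independent of $i$.  And that fact follows immediately from the mere existence of a compactly supported local solution, with no averaging or orbit-trimming and no restriction to $r=n-1$: if $\Omega_v$ is the support of the given $K_v$-solution and $\Lambda_{0,v}$ the $\mathcal{O}_{K_v}$-module it spans, the ultrametric triangle inequality gives
\[
\max_{x\in\Lambda_{0,v}}|L_i(x)|_v=\max_{x\in\Omega_v}|L_i(x)|_v,
\]
and the right-hand side depends only on the distribution of $L_i(X)$, hence is the same for all $i$.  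Since $L_i(\Lambda_{0,v})$ is a fractional ideal determined by this maximum, the images coincide.  (One then enlarges $\Lambda_{0,v}$ to a full-rank lattice without disturbing these images.)  This completely replaces your appeal to an extension of Proposition~\ref{pr:localmain}.

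Your second step also works harder than necessary.  Once $H^1$ of each $\mathcal{K}_i(D)$ vanishes, each $L_i$ induces a surjective homomorphism of \emph{finite abelian groups} from the fixed group $H^0(X,\mathcal{E}(D))$ onto the fixed group $H^0(X,\mathcal{L}(D))$; the fibers are cosets of the kernel and automatically have size $|H^0(\mathcal{E}(D))|/|H^0(\mathcal{L}(D))|$, independent of $i$.  No Riemann--Roch comparison of the $\mathcal{K}_i$ is needed, and hence neither is the stalkwise conjugacy of kernels you sought from the ellipsoidal condition.
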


\begin{cor} 
    Let $K$ be the function field of a smooth proper curve over a finite field, and let $a_1, \ldots, a_n$ be a set of elements of $K$. There exists a finite Galois extension $L/K$ and Galois conjugate elements $\theta_1, \ldots, \theta_r$ of $L$ satisfying $\sum a_i \theta_i = 0$ if and only if, for every place $v$ of $K$ and every $i$ we have
\beq
|a_i|_v \leq \max_{j \neq i} |a_j|_v.
\eeq
  
\end{cor}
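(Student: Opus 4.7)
The plan is to combine three ingredients already established in the excerpt: the equivalence $(1) \Leftrightarrow (2)$ of Proposition~\ref{pr:smythcriterion}, the function-field local-to-global principle of Theorem~\ref{th:ffl2g}, and the local characterization of Proposition~\ref{pr:localmain}.

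First, by $(1) \Leftrightarrow (2)$ of Proposition~\ref{pr:smythcriterion}, the existence of a Galois extension $L/K$ and Galois conjugates $\theta_1, \ldots, \theta_n \in L$ with $\sum a_i \theta_i = 0$ is equivalent to the existence of a compactly (i.e.\ finitely) supported joint distribution $(X_1, \ldots, X_n)$ on $K^n$, supported on the hyperplane $\sum a_i x_i = 0$, whose marginals are all equal in distribution. As explained in the discussion immediately following Theorem~\ref{th:main}, choosing a $K$-basis for this hyperplane is the same as choosing linear forms $L_1, \ldots, L_n$ in $X_1, \ldots, X_{n-1}$ satisfying the single linear relation $\sum a_i L_i = 0$; under this change of variables the marginal-equality condition becomes the nondeterministic equation $L_1 \eqd \cdots \eqd L_n$ over $K$.

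Next, I would apply Theorem~\ref{th:ffl2g} to this linear nondeterministic system. Since $r = n-1 < n$, its hypotheses are satisfied, and it yields that a compactly supported solution exists over $K$ if and only if one exists over every completion $K_v$. Finally, at each place $v$, Proposition~\ref{pr:localmain} tells us that a compactly supported solution over $K_v$ exists if and only if $a_1, \ldots, a_n$ satisfy the triangle inequality at $v$. Since $K$ is the function field of a smooth proper curve over a finite field, every place $v$ of $K$ is non-archimedean, so the triangle inequality at $v$ is precisely the ultrametric condition $|a_i|_v \leq \max_{j \neq i} |a_j|_v$ for all $i$. Chaining these three equivalences produces the corollary.

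There is no serious obstacle here, since the corollary is a bookkeeping consequence of the three cited results; the only thing requiring verification is that the change-of-variables step is legitimate (one must assume without loss of generality that the $a_i$ are not all zero and that the $L_i$ span the dual space, so the hypothesis of Proposition~\ref{pr:localmain} that the $L_i$ satisfy no other linear relation is met; degenerate cases where some $a_i = 0$ can be handled separately by restricting to the non-vanishing indices).
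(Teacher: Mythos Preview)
Your proof is correct and follows essentially the same approach as the paper: both arguments chain Proposition~\ref{pr:smythcriterion}, the change of variables to linear forms $L_1,\ldots,L_n$, Theorem~\ref{th:ffl2g}, and Proposition~\ref{pr:localmain}. The paper's version is terser, writing out only the sufficiency direction and leaving necessity implicit, whereas you explicitly frame the argument as a chain of equivalences; the content is the same.
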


\begin{proof} (of Corollary) Let $L_1, \ldots, L_n$ be linear forms in $X_1, \ldots, X_{n-1}$ which satisfy $\sum a_i L_i = 0$ and no other linear relation.  
 Proposition~\ref{pr:localmain} tells us that, under the local hypotheses of this corollary, there is, for each place $v$, a probability distribution on $K_v^n$ supported on the hyperplane $\sum_{i=1}^n a_i x_i = 0$ and with all $n$ marginals equal.  We can choose a $K_v$-basis $y_1, \ldots, y_{n-1}$ of linear forms on the hyperplane such that $x_i = L_i(y_1, \ldots, y_{n-1})$ for $i=1,2,\ldots, n$.  The distribution guaranteed by  Proposition~\ref{pr:localmain} now provides a non-deterministic solution to the equation $L_1 \eqd \ldots\eqd L_n$ over $K_v$.  This being the case for every $v$, Theorem~\ref{th:ffl2g} implies that $L_1 \eqd \ldots\eqd L_n$ has a solution by a random variable $X = (X_1, \ldots, X_{n-1})$ valued in $K^{n-1}.$ The joint distribution on $L_1(X), \ldots, L_n(X)$ is then the probability distribution on $K^n$ whose existence is the desired conclusion of Theorem~\ref{th:main}.

\end{proof}

The rest of this section will be devoted to proving Theorem~\ref{th:ffl2g}.

\subsection{Non-deterministic solutions over non-archimedean local fields}

We begin by considering the local situation.   Suppose for this section that \eqref{eq:lineq} is a non-deterministic equation over $K_v$ (i.e there is no requirement here that its coefficients lie in $K$.)

Let $\Omega$ be a compact subset of $K_v^r$, and write $\Lambda(\Omega)$ for the $\OO_{K_v}$-submodule of $K_v^r$ spanned by $\Omega$.  Let $L = L(X_1, \ldots, X_r)$ be a linear form over $K_v$. Then $L(\Lambda(\Omega))$ is the $\OO_{K_v}$-submodule of $K_v$ spanned by $L(\Omega)$.  It follows from the non-archimedean triangle equality that
\beq
\max_{x \in \Lambda(\Omega)} |L(x)|_v = \max_{x \in \Omega} |L(x)|_v.
\eeq

So suppose that $(X_1, \ldots, X_r)$ is a compactly supported distribution on $K_v^r$ such that $L_1(X_1, \ldots, X_r) \eqd \ldots \eqd L_n(X_1, \ldots, X_r)$.   Let $\Omega \in K_v^r$ be the support of $(X_1, \ldots, X_r)$.  The hypothesized equalities of distribution imply that $\max |L_i(X_1, \ldots, X_r)|_v$ is the same for all $i$; equivalently, $\max_{x \in \Omega} |L_i(x)|_v$ is the same for all $i$.  And by the discussion in the previous paragraph, this means that $\max_{x \in \Lambda(\Omega)} |L_i(x)|_v$ is the same for all $i$.

\subsection{Non-deterministic solutions over global function fields}

Now suppose $K$ is the function field of a smooth proper curve $C$ over a finite field $k = \F_q$, and suppose that $\eqref{eq:lineq}$ is an equation over $K$ which has a non-deterministic solution over $K_v$ for each place $v$ of $K$.  For each $v$, let $\Omega_v \subset K_v^r$ be the support of the local solution $(X_1, \ldots, X_r)$, and let $\Lambda_{0,v}$ be the $\OO_{K_v}$-submodule of $K_v^r$ spanned by $\Omega_v$.  It will be more convenient for us if $\Lambda_{0,v}$ is a {\em lattice} (an $\OO_{K_v}$-module whose $K_v$-linear span is all of $K_v^r$) so we define $\Lambda_v$ to be $\Lambda_{0,v} + m^N \OO_{K_v}^r$, where $m$ is a uniformizer in $\OO_{K_v}$ and $N$ is large enough so that $\max_{x \in \Lambda_v} |L_i(x)|_v = \max_{x \in \Lambda_{0,v}} |L_i(x)|_v$ for all $i$.

 We now have, for each place $v$, a lattice $\Lambda_v \in K_v^r$ such that $\max_{x \in \Lambda_v} |L_i(x)|_v$ is the same for all $i$. For all places $v$ such that all the coefficients of $L_1, \ldots, L_n$ are $v$-adic units (in particular, for all but finitely many places) we can and do take $\Lambda_v = \OO_{K_v}^n$. 

Now let $\FF$ be the sheaf on $C$ whose sections are those elements of $K^r$ whose image in $K_v^r$ lies in $\Lambda_v$ for every $v$.  We have chosen the $\Lambda_v$ such that, for every $v$, the lattices $L_1(\Lambda_v), \ldots, L_n(\Lambda_v) \in K_v$ are all equal.  It follows that the subsheaves $L_1(\FF), \ldots, L_n(\FF)$ of $K$ are all equal, since they are determined by the same local conditions.  We denote this line bundle by $\LL$.  Each linear form now provides a surjective morphism of sheaves $L_i: \FF \ra \LL$.  Denote the kernel of this map by $\KK_i$.  Choose some ample line bundle $\LL_0$ on $C$ and for any other sheaf $\SSS$ on $C$ denote $\SSS \tensor_{\OO_C} \LL_0^{\tensor d}$ by $\SSS(d)$.  Note that $\FF(d)$ and $\LL(d)$ can still be identified with subsheaves of $K^r$ and $K$ respectively.

We will now show that, for some sufficiently large $d$, taking $(X_1, \ldots, X_r)$ to be drawn uniformly from $\Gamma(C,\FF(d)) \subset K^r$ affords the desired non-deterministic solution to \eqref{eq:lineq} over $K$.  

For each $i$, the exact sequence
\beq
0 \ra \KK_i(d) \ra \FF(d) \ra \LL(d) \ra 0
\eeq
provides an exact sequence
\beq
\Gamma(C,\FF(d)) \ra \Gamma(C,\LL(d)) \ra H^1(C,\KK_i(d))
\eeq
But $H^1(C,\KK_i(d))$ vanishes for $d$ large enough by Serre's vanishing theorem, so $\Gamma(C,\FF(d)) \ra \Gamma(C,\LL(d))$ is a surjection of finite groups.  A surjective homomorphism of finite groups projects uniform distribution onto uniform distribution.  We have thus shown that if $(X_1, \ldots, X_r)$ is drawn uniformly from the finite set $\Gamma(C,\FF(d)) \subset K^r$, we have, for every $i$, that $L_i(X_1, \ldots, X_r)$ is uniformly distributed on $\Gamma(C,\LL(d)) \in K$.  So $(X_1,\ldots, X_r)$ is the desired non-deterministic solution of \eqref{eq:lineq} over $K$.

\section{Approximate local-to-global for non-deterministic solutions over $\Q$}

\label{s:approxl2g}

In the global setting, we say an equation \eqref{l1eqdln} over $K$ is ellipsoidal if it is ellipsoidal over $K_v$ for all places $v$; in this case we denote the ellipsoid witnessing the condition at $v$ by $\Omega_v$.  Note that, if all coefficients of some $L_i$ are $v$-adic units, and $X_1, \ldots, X_r$ are drawn independently from $\OO_{K_v}^r$, then $L_i(X_1, \ldots, X_r)$ is uniformly distributed in $\OO_{K_v}$.  So if all the coefficients of {\em all} the $L_i$ are $v$-adic units, \eqref{l1eqdln} is ellipsoidal with $\Omega_v = \OO_{K_v}^r$.  This implies in particular that every system \eqref{l1eqdln} over $K$ is locally ellipsoidal for all but finitely many $v$, with $\Omega_v$ taken to be the standard ellipsoid $\OO_{K_v}^r$.

From this point on, we restrict to the case $K=\Q$, since that is the case we will confine ourselves to for the remainder of the paper. Our expectation is that some version of the natural analogue of Proposition~\ref{pr:approxbalanced} will hold over an arbitrary number field $K$.

The main point of this section is to show that when \eqref{l1eqdln} is ellipsoidal, the local solutions over $\Q_p$ and $\R$ can be put together into an {\em approximate} solution over $K$, in a sense that we make precise below in Proposition~\ref{pr:approxbalanced}.

First of all, things will be simpler if we take a little care to make our ellipsoidal structure compatible with the global field.  At the non-archimedean places, there is no issue; we are just specifying a lattice in $\Q_p^r$, and any such lattice is ``global in origin" in the sense that it is the closure of the image of some lattice in $\Q^r$.  Over $\R$ the situation is not as good -- we might have perversely chosen a Euclidean norm on $\R^r$ which does not arise from any norm over $\Q$.  Fortunately, we can bounce back from any poor choices of this kind we may have made. 
By hypothesis, there exists a positive definite quadratic form $Q_0$ on $\R^r$ such that $L_1, \ldots, L_n \in (\R^r)^\vee$ lie in a single orbit of the orthogonal group $U_{Q_0}$ of $Q_0$.  The form $Q_0$ can be seen as an invertible linear isomorphism $\phi:\R^r \ra (\R^r)^\vee$.  With this notation, we can define the dual quadratic form $Q_0^\vee$ by
\beq
Q_0^\vee(L) = Q_0(\phi^{-1}(L)).
\eeq
The orthogonal group of $Q_0^\vee$ (in the dual action of $\GL(\R^r)$ on $(\R^r)^\vee$) is the same as that of $Q_0$.  So $L_1, \ldots, L_n$, being in a single orbit of that group, have the property that $Q_0^\vee(L_i) = Q_0^\vee(L_j)$ for all $i,j$.

We now consider the space of {\em all} positive definite quadratic forms $P$ on $(\Q^r)^\vee$ which assign the same value to each $L_i$.  For any such form, the $L_i$ lie in a single orbit of the orthogonal group of $P$.  The quadratic forms on $(\Q^r)^\vee$ form a vector space over $\Q$, and for each $i,j$, the constraint $P(L_i) = P(L_j)$ is a linear condition on that space.  So we have a linear subspace of the space of quadratic forms cut out by linear relations over $\Q$, and $Q_0^\vee$ provides an example of a {\em real} point of that subspace which is positive definite.  This implies that the subspace also contains $\Q$-rational forms which are positive definite.  We take $P$ to be one such, and let $Q$ be its dual.  Now $Q$ is a quadratic form on $\Q^r$ with the property that $L_1, \ldots, L_n$ all lie in the same orbit of the orthogonal group $O_Q(\R) = O_P(\R)$.  But we observe that, by Witt's theorem, $L_1,\ldots,L_n$ are indeed in an orbit of $O_Q(\Q) = O_P(\Q)$.

\begin{prop} Let $K = \Q$. Suppose the system
\beq
L_1(X_1, \ldots, X_r) \eqd \dots \eqd L_n(X_1, \ldots, X_r)
\eeq
is ellipsoidal, with $\Omega_p = \Z_p^r$ for all but finitely many primes $p$, and $\Omega_\R$ an ellipsoid of the form $Q(x) = 1$ with $Q$ a positive definite quadratic form defined over $\Q$.  Write $\Lambda$ for the lattice in $\Q^r$ consisting of those $x$ lying in $\Omega_p$ for every prime $p$.  Write $\Omega_\Q(D)$ for the (finite) intersection of $\Lambda$ with the ellipsoid $D \Omega_\R$ in $\R^r$.

Then a variable $(X_1,\ldots, X_r)$ drawn uniformly from $\Omega_\Q(D)$ satisfies 
\begin{equation}
\Pr(L_i(X_1, \ldots, X_r) = y) - \Pr(L_j(X_1, \ldots, X_r) = y) = O(D^{-2})
\label{eq:approxglobal}
\end{equation}
for all $i,j \in 1,\ldots n$ and all $y \in \Q$. 
\label{pr:approxbalanced}
\end{prop}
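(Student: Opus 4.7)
The plan is to write each probability as $\Pr(L_i(X) = y) = N_i(y)/N$, where $N = |\Omega_\Q(D)|$ and $N_i(y) = \#\{x \in \Omega_\Q(D) : L_i(x) = y\}$, and to show $N_i(y) - N_j(y) = O(D^{r-2})$ uniformly in $y$, alongside the standard count $N = \vol(\Omega_\R) D^r/\covol(\Lambda) + O(D^{r-1}) = \Theta(D^r)$. Division then produces the claimed bound \eqref{eq:approxglobal}.

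To estimate $N_i(y)$, let $\alpha_i > 0$ be the positive generator of the cyclic group $L_i(\Lambda) \subseteq \Q$; then $N_i(y) = 0$ unless $y \in \alpha_i \Z$. For $y \in \alpha_i \Z$ in the range $|y| \leq D \max_{\Omega_\R} |L_i|$, the fiber $L_i^{-1}(y) \cap \Lambda$ is a coset of the rank-$(r-1)$ lattice $\Lambda_i^0 := \ker L_i \cap \Lambda$, and it meets the $(r-1)$-dimensional ellipsoidal cross-section $H_i(y) := \{L_i = y\} \cap D\Omega_\R$ of the ambient ellipsoid $D\Omega_\R$. The standard boundary-strip lattice-point bound for smooth convex bodies in $\R^{r-1}$ of linear scale at most $D$ gives
$$N_i(y) = \frac{\vol_{r-1}(H_i(y))}{\covol(\Lambda_i^0)} + O(D^{r-2}),$$
uniformly in $y$. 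I would then show that the main term is the same for $L_j$. By Witt's theorem as noted preceding the proposition, $L_i$ and $L_j$ lie in a common orbit of $O_Q(\R)$, which preserves $D\Omega_\R$; any $u \in O_Q(\R)$ with $L_j = L_i \circ u$ carries $H_j(y)$ isometrically onto $H_i(y)$, so $\vol_{r-1}(H_i(y)) = \vol_{r-1}(H_j(y))$. For the denominator, a short fundamental-domain computation yields $\covol(\Lambda_i^0) = \covol(\Lambda)\,\|L_i\|_Q^*/\alpha_i$, where $\|\cdot\|_Q^*$ denotes the dual norm from $Q$. The $O_Q$-orbit hypothesis gives $\|L_i\|_Q^* = \|L_j\|_Q^*$, and $\alpha_i = \alpha_j$ follows from local ellipsoidality: at each prime $p$, some $u_p \in U(\Omega_p)$ carries $L_i$ to $L_j$, so $L_i(\Omega_p) = L_j(\Omega_p)$, i.e.\ $\alpha_i \Z_p = \alpha_j \Z_p$; this forces $\alpha_i/\alpha_j$ to be a $p$-adic unit for every $p$ and hence equal to $1$ (both being positive rationals).

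The main technical point is guaranteeing uniformity of the $O(D^{r-2})$ lattice-point error as $|y|$ approaches the extremal value $D\max_{\Omega_\R}|L_i|$, where the slices $H_i(y)$ degenerate. The key observation is that each $H_i(y)$ is a dilate of a single $(r-1)$-dimensional reference ellipsoid by a radius at most $D$, so the boundary-strip bound applies with a uniform implicit constant; near the extremes $N_i(y)$ and $N_j(y)$ are themselves $O(1)$ and the bound on their difference is trivial. Combining everything, $N_i(y) - N_j(y) = O(D^{r-2})$ uniformly in $y$, and dividing by $N = \Theta(D^r)$ yields \eqref{eq:approxglobal}.
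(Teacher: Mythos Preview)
Your proof is correct and follows essentially the same strategy as the paper: reduce to showing $|N_i(y)-N_j(y)|=O(D^{r-2})$, invoke the $O_Q$-symmetry relating $L_i$ to $L_j$, use the non-archimedean ellipsoidal hypothesis to get $L_i(\Lambda)=L_j(\Lambda)$, and apply the standard lattice-point count in a dilated $(r-1)$-dimensional convex body. The only organizational difference is that the paper transports the fiber $\{L_i=y\}\cap\Omega_\Q(D)$ to the hyperplane $L_j=y$ via an element $\gamma\in O_Q(\Q)$ and then compares two lattices (of equal covolume, since $|\det\gamma|=1$) against a \emph{single} region, whereas you compute the main term for each fiber in its own hyperplane and match them via the explicit covolume formula; for your version to parse, ``isometrically'' and $\vol_{r-1}$ must be read with respect to the $Q$-metric, which is consistent with your use of $\|L_i\|_Q^*$ in the covolume identity.
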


 Note that, here and throughout, he implicit constant in Landau notation may depend on the $L_i$.  The point is just that it is independent of $D$.

 \begin{rem}  The role of $D$ here is analogous to the role of the auxiliary line bundle $\LL_0^{\tensor d}$ in the proof of Theorem~\ref{th:ffl2g}.  We can think of the dilation by $D$ at the archimedean place as tensoring with a large power of a metrized line bundle whose degree is all concentrated at that place.
\end{rem}

\begin{proof}
Choose some $m \in \Q$ and a pair of indices $i,j$.  Write
\begin{eqnarray*}
S_i = \set{\lambda \in \Lambda: L_i(\lambda) = m, Q(\lambda) \leq D^2} \\
S_j = \set{\lambda \in \Lambda: L_j(\lambda) = m, Q(\lambda) \leq D^2}
\end{eqnarray*}

The total number of points $\lambda \in \Lambda$ with $Q(\lambda) \leq D^2$ is on order $D^r$.  So the statement to be proved is that $|S_i| - |S_j| = O(D^{r-2})$.

We have already established that $L_i$ and $L_j$ are related by an element $\gamma$ of $O_Q(\Q)$.  So
\beq
\gamma S_i = \set{\mu \in \gamma \Lambda: L_i(\gamma^{-1} \mu) = m, Q(\gamma^{-1} \mu) \leq D^2} 
=
\set{\mu \in \gamma \Lambda: L_j(\mu) = m, Q(\mu) \leq D^2}
\eeq
since $\gamma$ takes $L_i$ to $L_j$ and preserves $Q$.  

The subset of $\Lambda$ satisfying $L_i(\lambda) = m$ is the intersection of a rank $r$ lattice with an affine hyperplane, so it is either empty or a lattice of rank $r-1$ in that hyperplane.  We now use the non-archimedean ellipsoidal structure.  For each prime $p$, we have $L_i(\Omega_p) = L_j(\Omega_p)$ by hypothesis.  This implies that $L_i(\Lambda)$ and $L_j(\Lambda)$ are the same subgroup of $\Q$.  If $m$ is not in this subgroup, $S_i$ and $S_j$ are both empty and the problem is trivial, so we may assume $m$ lies in both $L_i(\Lambda)$ and $L_j(\Lambda)$.

Write $R$ for the region in $\R^r$ consisting of points $x$ with $L_j(x) = m$ and $Q(x) \leq D^2$.  Then $S_j$ and $\gamma S_i$ are each the intersection of $R$ with a rank $r$ lattice in the affine hyperplane $L_j^{-1}(m)$.  The two lattices are different; the former is $\Lambda \cap L_j^{-1}(m)$, the latter $\gamma \Lambda \cap L_j^{-1}(m)$.  However, as we have established, $\Lambda$ and $\gamma \Lambda$ have the same image in $\Q$ under $L_j$.  That means that the ratio of their covolumes in $L_j^{-1}(m)$ is the same as the ratio of their covolumes in $\R^r$.  But this latter ratio is just $|\det \gamma|$, which is $1$ because $\gamma$ is in the orthogonal group of a quadratic form.

Now let $v_j$ be the unique vector in $\R^r$ such that $L_j(x) = \langle x,v_j \rangle_Q$ for all $x \in \R^r$.  If $x$ lies on the affine hyperplane $L_j^{-1}(m)$, then $x-mv_j/Q(v_j)$ lies on the hyperplane $\ker L_j$, and moreover
\beq
Q(x-mv_j/Q(v_j)) =
Q(x) + m^2/Q(v_j) - \frac{2m}{Q(v,j)}\langle x, v_j = Q(x) - m^2/Q(v_j)
\eeq
What this means is that the region $R$ is a translate of the region $R_0$ consisting of those points $x \in \ker L_j$ such that $Q(x) \leq D^2 - m^2 / Q(v_j)$.  Translating $\gamma S_i$ and $S_j$ by $mv_j/Q(v_j)$ as well, we now find that each of $|S_i|$ and $|S_j|$ is the size of the intersection of an affine lattice with $R_0$.  

But $R_0$ is just a dilate by a factor at most $D$ of a fixed convex region of dimension $r-1$, and each of the two affine lattices are translates of a fixed lattice, namely $\Lambda \cap \ker L_j$ and $\gamma \Lambda \cap \ker L_j$.  The intersection of such a region $X$ with a translate of a fixed lattice $\Lambda_0$ is known to have size
\beq
\covol(\Lambda_0)^{-1} \vol(X) + O(D^{r-2})
\eeq
Applying this to both $\gamma S_i$ and $S_j$, and recalling that we have already shown $\Lambda \cap \ker L_j$ and $\gamma \Lambda \cap \ker L_j$ have the same covolume, we find that $|S_i| - |S_j|$ has absolute value at most $D^{r-2}$, which was the result to be proved.    
\end{proof}

Suppose now that $a_1, \ldots, a_n$ satisfy the conditions of Theorem ~\ref{th:main}.  Then Proposition~\ref{pr:localmain}, applied at each place $v$ of $K$, implies that the conditions of Proposition~\ref{pr:approxbalanced} are satisfied, i.e. the linear equation $\sum a_i x_i = 0$ admits a local ellipsoidal non-deterministic solution everywhere. It follows that the distribution on $K^{n-1}$ constructed in Proposition~\ref{pr:approxbalanced} yields variables $L_1, \ldots, L_n$ which are {\em almost} equal in distribution.

What remains is to show that an approximate non-deterministic solution to $L_1 \eqd \ldots \eqd L_n$ of this kind guaranteed by Proposition~\ref{pr:approxbalanced} can be perturbed to an exact solution. We turn to this now.

\section{Balanced and approximately balanced weightings on hypergraphs}

We can express the problem at hand in purely combinatorial terms as follows. Let $\Gamma$ be an {\em ordered $k$-uniform hypergraph} on a finite set of vertices $V$, by which we simply mean a set of ordered $k$-tuples $(v_1,\ldots, v_k)$ of elements of $V$, which we call ``edges."  An ordered $2$-uniform hypergraph is simply a directed graph (in which loops are allowed.)  Of course, we could simply replace the terminology ``ordered $k$-uniform hypergraph" with ``subset of $V^k$" but we have found the geometric point of view to be psychologically useful.

Write $E$ for the set of edges of $\Gamma$. If $e$ is an edge, we denote its vertices by $e_1, \ldots, e_k$.  A {\em balanced weighting} of $\Gamma$ is a nonzero map $w: E \ra \R_{\geq 0}$ such that, for every $v \in V$, the total weight $\sum_{e: e_i = v} w(e)$ is independent of the choice of $i$ in $1,\ldots, k$. 

The systems of nondeterministic equations studied in this paper can naturally be expressed in terms of balanced weightings.  Consider, for instance, the equation $L_1(X_1, \ldots, X_r) \eqd \ldots \eqd L_n(X_1, \ldots, X_r)$ over a global field $K$.  Let $\Gamma$ be the ordered $n$-regular hypergraph $\Gamma$ whose vertex set is $K$ and whose edges consist of those nonzero elements of $K^n$ lying in the image of $K^r$ under the map $(L_1, \ldots, L_n)$.  Then a solution of $L_1(X_1, \ldots, X_r) \eqd \ldots \eqd L_n(X_1, \ldots, X_r)$ is precisely a balanced weighting on $\Gamma$ with finite support and nonnegative integer weights, or, equivalently, such a balanced weighting on some finite subhypergraph of $\Gamma$.

Note that the issue of integer weights is not essential: the space of balanced (real-valued) weightings on $\Gamma$ is cut out by linear equations and inequalities in $\R^E$ with rational coefficients, so if this space is non-empty, it contains points with weightings in $\Q$, and because it is closed under scaling, we may dilate until the weights are in $\Z$.   

One is thus naturally led to the question of how to tell whether a finite ordered $k$-uniform hypergraph admits a balanced weighting.  When $k=2$ (i.e. when $\Gamma$ is a directed graph), a balanced weighting is a nonzero weighting of edges such that each vertex has equal weighted in-degree and weighted out-degree.  Such a weighting exists if and only if $\Gamma$ has a cycle. This is elementary, but is worth spelling out as a guide to the more involved argument to come. If $\Gamma$ has a cycle, assigning weight $1$ to the edges in the cycle and $0$ to the other edges is balanced. On the other hand, if $\Gamma$ is acyclic, there is a function $f$ from $V$ to $\R$ such that every edge in $E$ is increasing.  Let $w$ be a weighting on $\Gamma$.  Then 
\beq
\sum_v f(v) \left( \sum_{e: e_1 = v} w(e) - \sum_{e: e_2 = v} w(e) \right) = \sum_e w(e)(f(e_1) - f(e_2)).
\eeq
The right hand side is positive by the assumption on $f$ and the nonnegativity (and nonzeroness) of $w$.  But if $w$ were balanced, the left hand side would have to be zero.  So $w$ cannot be balanced.

We do not know a simple combinatorial criterion of this kind for the existence of a balanced weighting when $k > 2$. The goal of this section is to identify some conditions which are sufficient to guarantee that a balanced weighting exists, and which can be verified in the cases relevant to this paper.

We begin with some setup.  Let $\Gamma$ be an ordered $k$-uniform hypergraph with vertex set $V$ and edge set $E \subset V^k$. The vector spaces $\R^E$ and $\R^V$ are each naturally self-dual; this allows us in particular to think of any edge $e$ as an element of $\R^E$, namely the linear function that sends $e$ to $1$ and all other edges to $0$.  Let $U$ be the quotient of $(\R^V)^k$ by the diagonal copy of $\R^V$.  We now define a linear map $A: \R^E \ra U$ by
\beq
A(e) = (e_1, \ldots, e_k)
\eeq
If $w$ is a nonnegative real-valued function on $E$, then the $i$th component of $Aw$ is $\sum_{e \in E} w(e) e_i$; that is, it is the function whose value at $v$ is $\sum_{e: e_i = v} w(e)$.  So $w$ is balanced if and only if $Aw=0$. 

(We note that the map $A$ is already implicitly present in Smyth's original work~\cite[\S 4]{Smy86}.)

In the contexts we'll consider, $E$ is much larger than $V$, so the kernel of $A$ will certainly be nontrivial. But the question before us is the subtler one of whether $\ker A$ contains a nonzero vector {\em with all coordinates nonnegative.}

Suppose there is no such vector.  Then the image under $A$ of the nonnegative orthant in $\R^E$ must be contained in a half-plane in whatever plane it spans. Equivalently, there is some nonzero vector $f \in U^\vee$ such that $\langle f, Aw \rangle \geq 0$ for all nonnegative functions $w$ on $E$, and the inequality is strict for at least one $w$.  In other words, $A^\vee f$ is a nonzero vector in the nonnegative orthant of $\R^E$.  We call an $f$ satisfying this hypothesis {\em $\Gamma$-positive}. 

We may write any $f \in U^\vee$ as $f_1, \ldots, f_k$, with each $f_i$ a function on $V$, satisfying $\sum_i f_i = 0$.  (Here we have used the self-duality to identify $(\R^V)^k$ with its dual, and the quotient $U$ with the submodule $U^\vee$.)  In this representation, $f$ is $\Gamma$-positive if and only if $\sum_{i=1}^k f_i(e_i) \geq 0$ for every edge $e = (e_1, \ldots, e_k)$, and is positive for at least one $e$. 

\begin{rem}We note that, in the case $k=2$, a $\Gamma$-positive function is exactly an $(f_1, f_2)$ (with $f_2 = -f_1$ by hypothesis) where $f_1$ is nondecreasing on all edges and increasing on some edge.  When $\Gamma$ is acyclic, such a function exists; so if there is no nonzero $\Gamma$-positive function, $\Gamma$ has a cycle and thus a balanced weighting, as we have seen.
\end{rem}

We now turn to the question of proving the nonexistence of a $\Gamma$-positive function.  This is where the notion of an approximately balanced weighting comes into play.  First, some notation.  If $f = (f_1, \ldots, f_k)$ is an element of $U^\vee$, we denote by $\|f\|_1$ the norm $\sum_{i,v} |f_i(v)|$, and by $\|f\|_\infty$ the norm $\sup_{i,v} |f_i(v)|$.

Now let $c>0$ be a constant and suppose $w$ is a function on $E$ taking values between $c$ and $1$.  Let $f$ be an element of $U^\vee$.  Then
\beq
\langle w, A^\vee f \rangle 
= \langle Aw, f \rangle 
\leq \|Aw\|_\infty \|f\|_1.
\eeq
On the other hand, if $f$ is $\Gamma$-positive,
\beq
\langle w, A^\vee f \rangle \geq c \|A^\vee f\|_1.
\eeq
Putting these together, we find that
\begin{equation}
\|A^\vee f\|_1 \leq c^{-1} \|Aw\|_\infty \|f\|_1
\label{eq:atfinequality}
\end{equation}
We will not give a formal definition for ``approximately balanced" except to say it means $\|Aw\|_\infty$ is small; in other words, the imbalance of $w$ is small at {\em every} vertex.  What the argument above shows is that, if $\Gamma$ admits an approximately balanced weighting, then every $\Gamma$-positive $f$ is unexpectedly small in $L^1$ norm when $A^\vee$ is applied.  (We note in passing that, if $w$ is balanced on the nose, the above argument shows that there is no $\Gamma$-positive $f$.)

\begin{rem}
   It is worth noting that a more general combinatorial approach could potentially take the place of the arguments in the following section, which rely on special features of hypergraphs arising from nondeterministic linear systems.  The weightings we construct are quite special; when $K=\Q$, the hypergraphs in question have roughly $N^{k-1}$ edges and $N$ vertices for some parameter $N$, the weighting $w$ is the constant function $\mathbf{1}$, and $\|A\mathbf{1}\|_\infty$ is bounded above by a constant independent of $N$.  In general, one can ask:  If $\Gamma,E,V,A$ are defined as above, and if $\Gamma$ admits no balanced weighting, how small can $\|A\mathbf{1}\|_\infty$ be in terms of $|E|$ and $|V|$?  We do not even know the answer for $k=2$, in which case the question is: for a directed acyclic graph with $|E|$ edges on $|V|$ vertices, how small can
   \beq
   \max_{v \in V} (\mbox{in-degree}(v) - \mbox{out-degree}(v))
   \eeq
   be?
\end{rem}

\section{Proof of Theorem~\ref{th:main}}

From now on, we let $K=\Q$, and return to our consideration of a nondeterministic system of linear equations $L_1(X_1, \ldots, X_{n-1}) \eqd \ldots \eqd L_n(X_1, \ldots, X_{n-1})$ over $\Q$.  We suppose that the $L_i$ satisfy only a single linear relation, which we denote $\sum a_i L_i = 0$, and we assume the $a_i$ satisfy the local conditions of Theorem~\ref{th:main}.  Then Proposition~\ref{pr:localmain} and  Proposition~\ref{pr:approxbalanced} tells us that for each real $D > 0$ there exists a finite subset of $\Q^{n-1}$, there denoted $\Omega_\Q(D)$, which provides an approximate solution to the nondeterministic linear system.  Now and for the rest of this section, all constants, including implied constants in  Landau notation $f = o(g), f = O(g), f = \Theta(g)$, are allowed to depend on $n$, and the linear forms $L_1, \ldots, L_n$; what's important is that they don't vary with $D$.

We recall the key properties of $\Omega_\Q(D)$ established in section~\ref{s:approxl2g}:

\begin{itemize}
    \item $\Omega_\Q(D)$ is the intersection of a fixed lattice $\Lambda$ in $\R^{n-1}$ with the dilate $D \Omega_\R$ of a fixed ellipsoid $\Omega_\R \subset \R^{n-1}$.  In particular, $|\Omega_\Q(D)| = c D^{n-1} + o(D^{n-1})$. (Better error terms are possible, but aren't necessary for us.)  
    \item The image $L_i(\Lambda)$ is the same lattice in $\Q$ for each $i$.  Scaling if necessary, we can and do take this lattice to be $\Z$.  The real number $\mu = \max L_i(\Omega \R)$ is also independent of $i$.
    \item For any $m \in \Z$, the difference between the number of points $x$ in $\Omega_\Q(D)$ with $L_i(x) = m$ and the number of points with $L_j(x) = m$ is $O(D^{n-3})$.  (This is the output of Proposition~\ref{pr:approxbalanced}.) 
\end{itemize}

The following elementary lemma will be useful in several places; it shows that lattice points sufficiently near the edge of $\Omega_\Q(D)$ can take extreme values at only one $L_i$.  Note that $|L_i(x)| \leq \mu D$ for any $x \ \in \Omega_\Q(D)$.

\begin{lem}  There is a constant $\delta > 0$ such that, for any $x$ in $\Omega_\Q(D)$, there is at most one $i$ such that $|L_i(x)| > (1-\delta) \mu D$.
\label{le:deltaedge}
\end{lem}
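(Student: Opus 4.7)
My plan is to argue geometrically, exploiting the ellipsoidal structure in a very direct way. Let $Q$ be the positive definite quadratic form defining $\Omega_\R$, write $\langle \cdot, \cdot \rangle_Q$ for the associated inner product, and let $v_i \in \R^{n-1}$ be the unique vector with $L_i(x) = \langle x, v_i\rangle_Q$. Because $L_1,\ldots,L_n$ lie in a single $O_Q(\R)$-orbit, the vectors $v_i$ share a common $Q$-norm $\nu$, and Cauchy--Schwarz identifies $\mu = \max_{Q(x) \leq 1} L_i(x)$ with $\nu$. The condition $|L_i(x)| > (1-\delta)\mu D$ for $x$ with $Q(x) \leq D^2$ will then say that $x$ is nearly parallel to $v_i$: I will decompose $x = c_i(v_i/\nu) + y_i$ with $y_i$ being $Q$-orthogonal to $v_i$, obtaining $|c_i| > (1-\delta)D$ and $\|y_i\|_Q^2 = Q(x) - c_i^2 < 2\delta D^2$.

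Next, I will record that the directions $\R v_i$ are pairwise distinct. In the setup preceding the lemma the $L_i$ satisfy a single linear relation $\sum a_i L_i = 0$ with all $a_i \neq 0$, so any two-term dependence $L_i = \lambda L_j$ would yield an independent second relation, a contradiction. (Degenerate configurations where two forms are parallel force $n=2$ and can be excluded as trivial.) Consequently the $Q$-angles $\theta_{ij}$ satisfy $|\cos\theta_{ij}| < 1$, and since there are only finitely many pairs, $c := \max_{i \neq j}|\cos \theta_{ij}| < 1$.

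To conclude, I will combine the two observations. If both $|L_i(x)| > (1-\delta)\mu D$ and $|L_j(x)| > (1-\delta)\mu D$ held, then expanding $c_j = \langle x, v_j/\nu\rangle_Q = c_i \cos\theta_{ij} + \langle y_i, v_j/\nu\rangle_Q$ would give
\[
(1-\delta)D < |c_j| \leq |c_i| \cdot |\cos\theta_{ij}| + \|y_i\|_Q \leq cD + \sqrt{2\delta}\,D.
\]
Choosing $\delta > 0$ small enough (depending only on $c$, and hence only on the $L_i$ and $Q$) that $1 - \delta > c + \sqrt{2\delta}$ produces a contradiction. This $\delta$ will be uniform in $D$ and in $x$, which is exactly what the lemma asks for.

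The computations are routine, so the only real point of care is the pairwise non-proportionality of the $L_i$. Since this is already baked into the running hypotheses of the section, I do not expect it to be a serious obstacle.
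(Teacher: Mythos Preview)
Your argument is correct. Both your proof and the paper's rest on the same underlying observation: because no two of the $L_i$ are proportional, the ``extremal direction'' for $L_i$ on the ellipsoid is genuinely different from that for $L_j$, so a single $x$ cannot be nearly extremal for two of them at once.

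The execution differs, however. The paper argues geometrically: the unique point $x_i \in \Omega_\R$ with $L_i(x_i)=\mu$ is the tangent point of the hyperplane $L_i=\mu$, and since $L_j$ is not proportional to $L_i$ we have $L_j(x_i) < \mu$ strictly; the existence of $\delta$ then follows by a continuity/compactness argument. Your version instead passes to the $Q$-inner product, uses the orthogonal decomposition $x = c_i(v_i/\nu) + y_i$, and bounds $|c_j|$ via Cauchy--Schwarz in terms of the maximal angle cosine $c=\max_{i\ne j}|\cos\theta_{ij}|$. This is slightly more quantitative --- it gives an explicit choice of $\delta$ solving $1-\delta > c+\sqrt{2\delta}$ --- and it handles the absolute values in the lemma statement more transparently (the paper's proof as written treats only $L_i(x) > (1-\delta)\mu$, leaving the negative case to symmetry). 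Your explicit verification that the $L_i$ are pairwise non-proportional, via the uniqueness of the linear relation with all $a_i \neq 0$, is also a nice point that the paper leaves implicit.
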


\begin{proof}
The ellipsoid $\Omega_\R$ is tangent to the hyperplane $L_i(x) = \mu$ and the tangent point is the unique $x_i \in \Omega_\R$ such that $L_i(x_i) = \mu$.  This implies that $L_j(x_i)$ cannot also be $\mu$, since the tangent hyperplane to $\Omega_\R$ at $x_i$ is parallel to $L_i$, not $L_j$.  We conclude that $L_j(x_i)$ is strictly less than $\mu$; in particular, there is some $\delta_j$ such that for all $x \in \Omega_\R$ with $L_i(x) > (1-\delta_j) \mu$, we have $L_j(x) < (1-\delta_j) \mu$.  Take $\delta$ to be the minimum over all $i \neq j$ of $\delta_j$.  Then an $x$ with $L_i(x) > (1-\delta)\mu$ must have $L_j(x) < (1-\delta)\mu$ for all $j \neq i$.  Because any $L_j$ is related to $L_i$ by a symmetry of $\Omega_\R$, this $\delta$ fulfills the requirements of the theorem. 
\end{proof}

As promised, we are going to show that it's possible to perturb the uniform distribution on $\Omega_\Q(D)$ in order to arrive at an exact solution to $L_1 \eqd \ldots \eqd L_n$.  One problem that immediately faces us is that there may be integers $m \in \Z$ which lie in $L_i(\Omega_\Q(D)$ but not in $L_j(\Omega_\Q(D)$ for some $i,j$. In such a situation, we cannot assign any positive probability to a point $(x_1, \ldots, x_{n-1})$ with $x_i = m$, since that would make $\Pr(X_i = m)$ positive, while $\Pr(X_j = m)$ is necessarily zero.  Constraints like this will make things more complicated later, so we repair it now.

\begin{prop} There are arbitrarily large values of $D$ such that $L_i(\Omega_\Q(D))$ is the same subset of $\Z$ for all $i$.
\label{pr:sameimage}
\end{prop}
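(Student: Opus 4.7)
The plan is to reduce the problem to a one-dimensional measure estimate. Since $L_i(\Omega_\Q(D))$ grows monotonically with $D$, for each $i \in \{1,\ldots,n\}$ and $m \in \Z$ there is a threshold
\[ D_i(m) := \sqrt{\min\{Q(\lambda) : \lambda \in \Lambda,\ L_i(\lambda) = m\}}, \]
which is finite because $L_i(\Lambda) = \Z$, and $m \in L_i(\Omega_\Q(D))$ if and only if $D \geq D_i(m)$. Thus the conclusion of the proposition is equivalent to the statement that $D$ avoids the ``bad set''
\[ B := \bigcup_{m \in \Z} \bigl[\min_i D_i(m),\ \max_i D_i(m)\bigr), \]
the union being over those $m$ for which the $D_i(m)$ are not all equal. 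It suffices to show that $B$ has arbitrarily large gaps.

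The key computation is an explicit formula for $D_i(m)$ via the $Q$-orthogonal decomposition adapted to $L_i$. Let $v_i$ be the unique vector with $L_i(x) = \langle x, v_i\rangle_Q$; then $Q(v_i) = \mu^2$. Writing $\lambda = (m/\mu^2) v_i + y$ with $y \in \ker L_i$ gives $Q(\lambda) = m^2/\mu^2 + Q(y)$, so minimizing over $\lambda \in \Lambda$ with $L_i(\lambda) = m$ reduces to minimizing $Q(y)$ over a coset of the rank-$(n-2)$ lattice $\Lambda_i := \Lambda \cap \ker L_i$ inside $\ker L_i$. Writing $f_i(m)^2$ for this minimum, the coset in question depends linearly on $m$ modulo $\Lambda_i$, so traces a cyclic subgroup of the torus $\ker L_i/\Lambda_i$; hence $f_i(m)$ is bounded uniformly in $m$ by the $Q$-covering radius of $\Lambda_i$. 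Consequently,
\[ D_i(m)^2 = \frac{m^2}{\mu^2} + f_i(m)^2,\qquad f_i(m) = O(1), \]
which yields $D_i(m) = |m|/\mu + O(1/|m|)$ and $\max_i D_i(m) - \min_i D_i(m) = O(1/|m|)$ for large $|m|$.

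The measure estimate is then routine. Fix $D_0$ large. A bad interval associated to $m$ meets $[D_0, 2D_0]$ only when $|m|$ lies in a range of length $O(D_0)$ near $\mu D_0$, and each such interval has length $O(1/D_0)$, so $|B \cap [D_0, 2D_0]| = O(1)$. Since $[D_0, 2D_0]$ has length $D_0 - 1$, for $D_0$ sufficiently large the complement $[D_0, 2D_0] \setminus B$ is nonempty; any such $D$ satisfies the conclusion of the proposition, and taking $D_0 \to \infty$ produces arbitrarily large admissible $D$. The only substantive step is the uniform bound $f_i(m) = O(1)$; once that is in place the rest is bookkeeping with error terms, and crucially the argument does not invoke Proposition~\ref{pr:approxbalanced}, which would only give an approximate equidistribution of integers in the images.
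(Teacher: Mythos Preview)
Your argument is correct, and it shares with the paper's proof the essential computational core: the $Q$-orthogonal splitting $\lambda = (m/\mu^2)v_i + y$ with $y \in \ker L_i$ gives $Q(\lambda) = m^2/\mu^2 + Q(y)$, so the fiber over $m$ is a coset of the fixed lattice $\Lambda_i = \Lambda \cap \ker L_i$, and the minimum $f_i(m)^2$ is uniformly bounded by the $Q$-covering radius of $\Lambda_i$. Where the two proofs diverge is in the final step. The paper proceeds constructively: it takes $D$ divisible enough that $D\mu \in \Z$, notes that the worst case is $m = D\mu - 1$ giving $D^2 - m^2/\mu^2 = 2D/\mu - 1/\mu^2$, and observes that for $D$ large this exceeds every covering radius, so each $L_i$ surjects onto $[-D\mu, D\mu] \cap \Z$. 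You instead encode the problem via threshold functions $D_i(m)$, extract the bound $\max_i D_i(m) - \min_i D_i(m) = O(1/|m|)$, and find good $D$ by a Lebesgue-measure argument on the bad set. Your route is arguably slicker for the bare statement, but the paper's gives more: it identifies the common image explicitly as $[-D\mu, D\mu] \cap \Z$ and pins down which $D$ work, and both facts are used in what follows (this interval is named $V_D$, and its explicit endpoints and the fiber sizes near them reappear in the boundary analysis of Proposition~\ref{pr:edgeconcentration}). One small remark: the ``cyclic subgroup'' observation about the cosets is correct but not needed---all you use is that each coset lies in the compact torus $\ker L_i/\Lambda_i$, which already gives the covering-radius bound.
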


\begin{proof}  Let $x_j$ be the point in the ellipsoid $\Omega_\R$ where $\Omega_\R$ is tangent to $L_i(x) = \mu$.  Then $x_j$ has rational coordinates.  Choose $D$ to be divisible enough that $L_i(Dx_i) = D \mu$ is an integer.  

Let $m$ be an integer in $[-D\mu,D\mu]$.  We recall the notation from the proof of Proposition~\ref{pr:approxbalanced}, in which $v_i$ is the vector in $\Q^r$ such that $L_i(x) = \langle x,v_i \rangle_Q$ for all $x \in \Q^r$.  We showed in that proof that the subset of $x \in \Omega_{\Q}(D)$ with $L_i(x) = m$ is the translate by $m v_i / Q(v_i)$ of the subset of points $y$ in the real vector space $\ker L_i$ with $Q(y) \leq D^2 - m^2 / Q(v_j)$ and $y$ lying in a certain translate of the lattice $\Lambda_i = \Lambda \cap \ker L_i$.

First of all, we note that this set consists of a single point when $m = D \mu$, so $D^2 - (D \mu)^2 / Q(v_j) = 0$; in other words,  $Q(v_j) = \mu^2$.

Now the minimum value that $D^2 - m^2 / Q(v_j)$ can take for $m \in [-D\mu,D\mu]$ is attained when $m = D \mu - 1$, in which case we have
\beq
D^2 - m^2 / Q(v_j) = D^2 - (D^2 - 2 D \mu^{-1} + \mu^{-2}) = 2 D \mu^{-1} - \mu^{-2}.
\eeq
On the other hand, there is some $C$ large enough such that the subset of $\ker L_i$ satisfying $Q(x) < C$ contains a point of any translate of the lattice $\Lambda_i$.  Choosing $D$ large enough that $2 D \mu^{-1} - \mu^{-2} > C$, we have shown that the image of $\Omega_\Q(D)$ under $L_i$ is the set of all integers in the interval $[-D\mu, D\mu]$. 
 The same argument applies to each $i \in 1,\ldots, n$, so we are done.

\end{proof}

From this point onward, we always require $D$ to be an integer satisfying the conditions of Proposition~\ref{pr:sameimage}; we denote the interval $[-D\mu, D\mu]$ in $\Z$ by $V_D$ and we denote $\Omega_\Q(D)$ by $E_D$, since we are thinking of these as the vertices and edges of an ordered $n$-uniform hypergraph $\Gamma$.  The theorem to be proved is that $\Gamma$ admits a balanced weighting.

We maintain the notation of the previous section:  $U_D^\vee$ is the subspace of $(\R^{V_D})^n$ consisting of $n$-tuples of functions summing to $0$, and a function $f \in U_D^\vee$ is $\Gamma$-positive if $\sum_{i=1}^n f_i(e_i) \geq 0$ for all edges $e \in E_D$, and positive for at least one edge.  The existence of a balanced weighting follows if we can prove there are no $\Gamma$-positive functions in $U_D^\vee$. (The reader may find it helpful to notice that determining the existence of a balanced weighting for $\Gamma$ can be viewed as a linear programming feasibility problem; the existence of a $\Gamma$-positive function in $U_D^\vee$ is then equivalent to the dual program being unbounded, which is equivalent to the primal program being infeasible.)

The result of Proposition~\ref{pr:approxbalanced} tells us that $\|A\mathbf{1}\|_\infty = O(D^{n-3})$, where $\mathbf{1}$ denotes the constant weighting $1$ on $E_D$.  From this fact, it follows by \eqref{eq:atfinequality} that, if $f$ is a $\Gamma$-positive function,
\begin{equation}
\|A^\vee f\|_1 = O(D^{n-3}\|f\|_1).
\label{eq:ellipseatfinequality}
\end{equation}
Since each value $A^\vee f(e)$ is a sum $\sum_{i=1}^n f_i(e_i)$ of $n$ values, and $|E_D|$ is of order $D^{n-1}$ while $|V_D|$ is of order $D$, one might expect the $L^1$ norm of $A^\vee f$ to be about $D^{n-2}$ times that of $f$; thus, \eqref{eq:ellipseatfinequality} can be thought of as asserting that a $\Gamma$-positive function $f$ undergoes a substantial amount of cancellation under $A^\vee$; or, in other words, that $f$ is unexpectedly large in $L^1$ relative to $A^\vee f$.

It would be convenient were this to be impossible, but that's too much to ask.  One problem is the presence of elements of vertices with very small degree in $\Gamma$; these occur near the edge of the interval $V_D$.  For instance, we have specified that the maximal element of $V_D$ occurs as $e_i$ for {\em just one} edge $e \in E_D$, which means that for a function $f$ supported at that element, $\|A^\vee f\|$ will be $O(\|f\|_1)$, much smaller than $D^{n-2} \|f\|_1$.

And that's not the only problem.  There are nonzero functions in the kernel of $A^\vee$, such as constant functions (and in some cases there are others as well) and when $g$ is such a function, $A^\vee g$ is of course not large compared to $g$. And if $g \in \ker A^\vee$, then adding a large multiple of $g$ to any $f$ will produce a function which shrinks substantially under $A^\vee$.

The first and most involved part of our argument will show that, for the hypergraphs considered here, these are the only two problems; we can modify any $f$ by an element of $\ker A^\vee$ to get a function whose $L^1$ norm, away from some region near the edge of $V_D$, is no larger than we expect it to be.  This part of the argument does not require $f$ to be $\Gamma$-positive.

\medskip

Let $\epsilon > 0$ be a small constant (in particular, smaller than the $\delta$ in Proposition~\ref{le:deltaedge}) and write $V_{(1-\epsilon)D}$ for the set of integers between $-D(1-\epsilon)\mu$ and $D(1-\epsilon)\mu$.  Having chosen $\epsilon$, we keep it fixed throughout the proof, and all implied constants in Landau notation are allowed to depend on $\epsilon$.

\begin{prop} There exists a constant $C$ (depending on the $L_i$, but not on $D$) with the following property:  every coset in $U_D^\vee / \ker A^\vee$ contains an element $f$ such that
\beq
\|f|{V_{(1-\epsilon)D}}\|_1 \leq C D^{2-n} \|A^\vee f\|_1.
\eeq
\label{pr:smalloninterior}
\end{prop}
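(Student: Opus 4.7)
I would approach this by constructing an explicit near-minimal representative of the coset via a slice-averaging test function. For each interior $v \in V_{(1-\epsilon)D}$ and each $i$, introduce
$$\phi_{i,v}(e) := N_i(v)^{-1}\mathbf{1}_{\{L_i(e) = v\}}, \qquad N_i(v) := \#\{e \in E_D : L_i(e) = v\}.$$
Interiority of $v$ places the hyperplane $\{L_i = v\}$ well inside $D\Omega_\R$, so the slice is a non-degenerate $(n-2)$-dimensional ellipsoidal section of diameter $\Theta(D)$, and a standard lattice-point count gives $N_i(v) = \Theta(D^{n-2})$, hence $\|\phi_{i,v}\|_\infty = O(D^{2-n})$. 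Direct computation yields the identity
$$\langle \phi_{i,v}, A^\vee f\rangle \;=\; f_i(v) + \sum_{j\neq i}(T_{ij}f_j)(v), \qquad (T_{ij}g)(v) := \EE_{e\sim\phi_{i,v}}\,g(L_j(e)).$$
Assembling these with signs $s_i(v) = \mathrm{sign}\, f_i(v)$ into $\psi := \sum_{i, v \in V_{(1-\epsilon)D}} s_i(v)\phi_{i,v}$ --- which still satisfies $\|\psi\|_\infty = O(D^{2-n})$ because each edge lies in only $n$ slices --- yields
$$\|f|_{V_{(1-\epsilon)D}}\|_1 \;\leq\; O(D^{2-n})\,\|A^\vee f\|_1 \;+\; \Bigl|\sum_{i, v \in V_{(1-\epsilon)D}} s_i(v)\sum_{j\neq i}(T_{ij}f_j)(v)\Bigr|.$$
The proposition then reduces to absorbing the off-diagonal correction on the right into the left-hand side by choosing the coset representative wisely.

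This absorption step is the main obstacle: the naive bound on the correction is only $O(\|f\|_1)$, which is useless since it already exceeds $\|f|_{V_{(1-\epsilon)D}}\|_1$. My plan is a two-stage normalization using $\ker A^\vee$. First, exploit the manifest $(n-1)$-dimensional subspace of $\ker A^\vee$ consisting of constant tuples $(c_1,\dots,c_n)$ with $\sum_i c_i = 0$ to replace $f$ by a representative in which each $f_i$ has mean zero on $V_D$; each $(T_{ij}f_j)(v)$ then becomes the integral of a mean-zero function against the conditional slice density $P_{ij}(v,\cdot)$ of $L_j$ on $\{L_i = v\}$. Second, use any further elements of $\ker A^\vee$ arising from polynomial identities $\sum_i p_i(L_i) \equiv 0$ (starting with the relation $\sum a_i L_i = 0$ and continuing through its higher-degree polynomial consequences) to orthogonalize $f$ away from the finite-dimensional low-degree polynomial subspace on which $T_{ij}$ intrinsically fails to contract.

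On the resulting oscillatory subspace I expect each $T_{ij}$ to contract by a factor $O(1/D)$: the slice density $P_{ij}(v,\cdot)$ varies smoothly with $v$ (since smooth convex bodies have smoothly varying sections) and is approximately flat on its support with error $O(1/D)$ by Euler--Maclaurin-type lattice-point estimates, so integration of a mean-zero, polynomial-orthogonal function against it produces substantial cancellation. A Neumann series $f = (I+R)^{-1} T A^\vee f$, where $R$ aggregates the $T_{ij}$'s and $T$ is the slice-averaging operator adjoint to $A$, then recovers the desired bound. The most delicate step --- and the real technical heart of the argument --- is the quantitative equidistribution of the slice densities $P_{ij}(v,\cdot)$ for interior $v$; this is precisely where the $\epsilon$-margin is used, since Lemma~\ref{le:deltaedge} identifies the boundary regime in which the slice geometry degenerates and these estimates fail.
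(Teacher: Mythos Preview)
Your slice-averaging identity $\langle \phi_{i,v}, A^\vee f\rangle = f_i(v) + \sum_{j\neq i}(T_{ij}f_j)(v)$ is correct, and you correctly identify that the whole proof hinges on controlling the off-diagonal term. But the claimed $O(1/D)$ contraction of $T_{ij}$ on the ``oscillatory subspace'' is false, and this is a genuine gap. Already for $n=3$ the slice $\{L_i=v\}\cap E_D$ is a segment on which $L_j$ is affine, so $P_{ij}(v,\cdot)$ is (up to lattice error) uniform on a subinterval $I_v\subset V_D$ of length $\Theta(D)$ whose center moves with $v$. Take $g(w)=\cos(\pi w/(D\mu))$, which is mean-zero on $V_D$ and can be made orthogonal to any fixed finite list of polynomials by a bounded adjustment; then $(T_{ij}g)(v)$ is essentially a sinc factor of size $\Theta(1)$ times a unimodular phase, so $\|T_{ij}g\|_1=\Theta(\|g\|_1)$ with no gain whatsoever. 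The point is that mean-zero \emph{on $V_D$} is not mean-zero \emph{on $I_v$}, and you cannot enforce the latter for all $v$ using the at-most-$n$-dimensional space $\ker A^\vee$. For $n\geq 4$ the continuous slice density is $\propto(R^2-t^2)^{(n-3)/2}$, not flat, so the situation is no better. The operators $T_{ij}$ are genuine Markov-type averaging operators with $L^1\to L^1$ norm $\Theta(1)$ on your subspace, and the Neumann series does not converge.

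The paper sidesteps this obstacle by never bounding the off-diagonal terms at all: it \emph{differentiates them away}. Choosing $y,y'\in\Lambda$ so that for each $j>1$ either $L_j(y)=0$ or $L_j(y')=0$, the four-corner combination $A^\vee f(x)-A^\vee f(x+y)-A^\vee f(x+y')+A^\vee f(x+y+y')$ annihilates every $f_j$ with $j>1$ and equals the pure second discrete derivative $\Delta_a\Delta_{a'}f_1(L_1(x))$. Summing over parallelograms gives $\|\Delta_{ka}\Delta_{k'a'}f_1\|_1^{int}=O(D^{2-n}\|A^\vee f\|_1)$ for a range of $k,k'$; a chain of elementary lemmas then shows that small double discrete derivatives force $f_1$ to be $L^1$-close to a linear function on each residue class mod a bounded $P$. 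A second identity along segments in directions $y_{ij}$ (annihilating all $L_\iota$ except $L_i,L_j$) pins down the slopes, the coset freedom is spent subtracting the linear kernel element $g_i(x)=a_ix$ when $\sum a_i=0$, and the remaining $P$-periodic part is handled by a finite-dimensional compactness argument over $\Z/P\Z$. The key structural input you are missing is precisely this parallelogram trick, which is what makes the case $r=n-1$ tractable.
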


\begin{proof}
Let $y$ and $y'$ be nonzero elements of $\Lambda$ such that, for each $j > 1$, either $L_j(y)$ or $L_j(y')$ is $0$.  (For instance, we might take $y_1$ to be in the kernel of $L_3, \ldots, L_n$ and $y_2$ in the kernel of $L_2, L_4, \ldots, L_n$.)  We emphasize that $y$ and $y'$ are independent of $D$.  Write $a,a'$ for $L_1(y),L_1(y')$.  Then, for any $x$ in $E_D$ such that $x,x+y,x+y'$, and $x+y+y'$ all lie in $E_D$, we have
\begin{equation}
\begin{array}{rcl}
    A^\vee f (x) - A^\vee f (x+y) - A^\vee f (x+y') + A^\vee f (x+y+y') & = & \\
      f_1(L_1(x)) - f_1(L_1(x) + a)  - f_1(L_1(x) + a') + f_1(L_1(x) + a + a'). \\
\end{array}
\label{eq:parallelogram}
\end{equation}

The identity \eqref{eq:parallelogram} is our key tool for controlling $f_1$ (and of course any other $f_i$) given some control over the values of $A^\vee f$.  

\begin{rem}
    This is one place where we really use the condition that $r=n-1$.  When $r$ is much smaller than $n$, there won't be a pair $y,y'$ such that all $L_i$ other than $L_1$ vanish on either $y$ or $y'$; it will be necessary to use larger sets of $y$, which in turn means replacing the parallelograms in \eqref{eq:parallelogram} with higher-dimensional parallelipipeds, and later in the argument replacing discrete second derivatives with discrete higher-order derivatives.  We find it plausible, but certainly not obvious, that this can be made to work.
\end{rem}

For any integer $r$ and any function $F$ on $\Z$, we write $\Delta_r F$ for the discrete derivative function given by $\Delta_r F(m) = F(m+r) - F(m)$.  Then the right-hand side of \eqref{eq:parallelogram} can be more compactly written as $\Delta_a \Delta_a' f_1(L_1(x))$.

Fix another small constant $\eta > 0$, and let $k,k'$ be integers which are less than $\eta D$.  Let $P_{k,k',D}$ be the set of all parallelograms $\mathbf{p}_x = x,x+ky,x+k'y',x+ky+k'y'$ which are contained in $E_D$.  We now consider the sum
\beq
S_{k,k',D} = \sum_{\mathbf{p}_x \in P_{k,k',D}} A^\vee f (x) - A^\vee f (x+ky) - A^\vee f (x+k'y') + A^\vee f (x+ky+k'y').
\eeq
On the one hand, if $e$ is an element of $E_D$, the value $A^\vee f (e)$ occurs at most four times in $S_{k,k',D}$.  So $S_{k,k',D} \leq 4\|A^\vee f\|_1$.

On the other hand, applying \eqref{eq:parallelogram} tells us that
\beq
S_{k,k',D} = \sum_{\mathbf{p}_x \in P_{k,k',D}} \Delta_{ka} \Delta_{k'a'} f_1(L_1(x)).
\eeq
In order to know how many times a given value $\Delta_{ka} \Delta_{k'a'} f(m)$ appears in this latter sum, we need to know how many parallelograms in $P_{k,k',D}$ are based at an $x \in E_D$ with $L_1(x) = m$.  In other words, we are asking for the number of $x \in E_D$ such that $L_1(x) = m$ and $x,x+ky,x+k'y',x+ky + k'y'$ all lie in $E_D$. 
 When $m$ is very close to the edge of $V_D$, this number can certainly be small; for example, when $m$ is equal to its extremal value $D\mu$ there are no such elements of $E_D$. We claim, however, that when $m \in V_{(1-\epsilon)D}$, this number is bounded below by $c_{\eta,\epsilon} D^{n-2}$, where $c_{\eta,\epsilon} > 0$ so long as $\eta$ is small enough relative to $\epsilon$.  To see this, consider the region $R$ in $\Omega_\R$ consisting of all $x$ such that $L_1(x) = m/D$ and $x + \eta y, x + \eta y',$ and $x + \eta y + \eta y'$ all lie inside $\Omega_\R$.  As long as $\eta$ is small enough relative to $\epsilon$, this region is a positive-volume convex region inside the ellipsoidal slice $L_1(x) = m/D$.  What's more, a point $x$ in $E_D = \Lambda \cap D \Omega_\R$ is the base of a parallelogram in $P_{k,k',D}$ if and only if it lies in $\Lambda \cap DR$. This finite set is the intersection in an $(n-2)$-dimensional hyperplane of the $D$-dilate of a fixed positive-volume region with a translate of a fixed lattice (namely, the lattice $\Lambda_i = \Lambda \cap \ker L_i$.) The cardinality of the intersection thus grows with order $D^{n-2}$.  This shows that
\beq
S_{k,k',D} \geq \sum_{m \in V_{(1-\epsilon)D}} c_{\epsilon,\eta} D^{n-2} |\Delta_{ka} \Delta_{k'a'} f_1(m)|.
\eeq

Combining the upper and lower bounds for $S_{k,k',D}$ obtained above, and absorbing the dependence on $\eta$ into that on $\epsilon$, we have now established that, as long as $\eta$ is small enough relative to $\epsilon$,
\begin{equation}
\sum_{m \in V_{(1-\epsilon)D}} |\Delta_{ka} \Delta_{k'a'} f_1| = O(D^{2-n} \|A^\vee f\|_1)
\label{eq:DeltaDeltabound}
\end{equation}
for any $k,k' \leq \eta D$.

As one might expect, the bound above on the size of double discrete derivatives will allow us to show that $f_1$ is well-approximated by a linear function, at least on residue classes of bounded modulus.  We will need the following three elementary lemmas.  It seems very plausible that the third lemma in the chain, which is the one we need, can be proven more directly, and we invite the reader to tell us how to do so.

\begin{lem} Let $x_1, \ldots, x_M$ be a set of $M$ real numbers and let $\EE x$ be their mean.  Then
\begin{equation}
\sum_{i=1}^M |x_i - \EE x| \leq 
(1/M) \sum_{i,j} |x_i - x_j|.
\label{eq:smalldifferences}
\end{equation}
\label{le:smalldifferences}
\end{lem}

\begin{proof}  This follows from Jensen's inequality.  If $X$ and $Y$ are random variables drawn uniformly and independently from $x_1, \ldots, x_M$, then the left-hand side of \eqref{eq:smalldifferences} is $M \EE_X |\EE_Y(X-Y)|$, while the right-hand side is $M \EE_X \EE_Y (|X-Y|)$.
\end{proof}

\begin{lem} Let $\eta, C$ be positive constants.  Let $F$ be a function on $1, \ldots M$ such that $\|\Delta_k F\|_1 \leq C$ for all $1 \leq k \leq \eta M$.  Then
\beq
\|F-\EE F\|_1 = O_\eta(C).
\label{le:shortintervals}
\eeq
\end{lem}

\begin{proof} 
(First, note that $\Delta_k F$ is a function on $1, \ldots, M-k$, not $1,\ldots M$; this will not matter in the proof.

Consider an interval $I = [x,x+\eta M]$.  We have
\beq
\sum_{x,y \in I} |F(x) - F(y)| \leq \sum_{k=1}^{\eta M} \|\Delta_k F\|_1 \leq \eta M C.
\eeq

Write $A_I$ for the average of $F$ over $I$. By Lemma~\ref{le:smalldifferences} we have
\beq
\sum_{x \in I} |F(x) - A_I| \leq M^{-1} \sum_{x,y \in I} |F(x) - F(y)|. 
\eeq

Putting these together, we get
\beq
\sum_{x \in I} |F(x) - A_I| \leq \eta C
\eeq
which is to say that $F(x)$ is approximately constant on this interval.  If $I_1$ and $I_2$ are intervals of size $\eta M$ whose overlap $I_{12}$ has size $(1/2)\eta M$, we see that
\beq
\sum_{x \in I_{12}} |F(x) - A_{I_1}| \leq \sum_{x \in I_1} |F(x) - A_{I_1}|
\leq \eta C
\eeq
and similarly for $|F(x) - A_{I_2}|$; subtracting these two functions, we find that
\beq
\sum_{x \in I_{12}} |A_{I_1} - A_{I_2}| \leq 2 \eta C
\eeq
which is to say the two means differ by at most $4 C / M$.  But since we can get from any length-$\eta M$ interval to any other in a bounded number of steps of this kind, all the $A_I$ lie within $O_\eta(C/M)$ of each other, and thus the overall mean $\EE F$ lies within $O_\eta(C/M)$ of each $A_I$. Combined with our estimate for the $L^1$ difference between $F$ and $A_I$ on $I$, this yields the desired result.

\end{proof}

The following lemma is analogous to Lemma~\ref{le:shortintervals}, but with double discrete derivatives instead of single ones.  Just as Lemma~\ref{le:shortintervals} says that a function with many $L^1$-small discrete derivatives must be $L^1$-close to a constant, this one says that a function with many $L^1$-small double discrete derivatives must be $L^1$-close to a linear function.  One imagines that the analogous statement for derivatives of any degree is true as well, and perhaps is even known, but we were not able to find a statement to this effect in the literature.

\begin{lem}
    Let $\eta,C$ be positive constants.  Let $F$ be a function on $1,\ldots,M$ and suppose that $\|\Delta_{k'} \Delta_k F \|_1 \leq C$ for all $1 \leq k,k' \leq \eta M$.  Then there is a linear function $L(x) = mx+b$ such that $\|F - L\|_1 = O(C)$.
    \label{le:linapprox}
\end{lem}

\begin{proof}
It follows from Lemma~\ref{le:shortintervals} that $\|\Delta_{k} F - \EE \Delta_{k} F\|_1 = O_\eta(C)$ for every $k \leq \eta M$.  Choose a $k$ which is a small multiple of $\eta M$; say, between $(1/10) \eta M$ and $(1/20) \eta M$.  Write $\gamma$ for $\EE \Delta_{k} F$.  Note first that the function $F_\gamma$ sending $x$ to $F(x) - \gamma x$ satisfies $\Delta_{k'} \Delta_k F_\gamma = \Delta_{k'} \Delta_{k} F$ for all $k,k'$, and $F_\gamma$ is $L^1$-close to a line  if $F$ is. So without loss of generality we may assume that $\gamma = 0$, whence $\| \Delta_k F \|_1 = O_\eta(C)$.

Now consider the function $\tilde{F}$ defined by $\tilde{F}(x) = F(\bar{x})$ where $\bar{x}$ is the unique integer in $1,\ldots,k$ congruent to $x$ mod $k$.  The function $\tilde{F}$ is periodic; it is pulled back from a function $\bar{F}$ under the reduction map from $1,\ldots, M$ to $\Z/kZ$.  We claim that $\|\tilde{F} - F\|_1 = O_\eta(C)$.  To see this, note that the sum of $F(x) - \tilde{F}(x) = F(x) - F(\bar{x})$ over all $x$ can be expressed as a sum $\Delta_k F(\bar{x}) + \Delta_k F(\bar{x} + k) + \ldots + \Delta_k F(x - k)$, a sum of $O_\eta(1)$ values of $\Delta_k$ at values congruent to $x$ mod $k$.  So the sum of $F(x) - F(\bar{x})$ is a sum of values of $\Delta_k$ which includes any given $\Delta_k(y)$ at most $O_\eta(1)$ times; this bounds $\|\tilde{F} - F\|_1$ by a constant multiple of $\|\Delta_k\|_1$, as claimed.

It thus suffices to show the existence of a linear function $L$ such that $\|\tilde{F} - L\|_1 = O_\eta(C)$. Let $\bar{F}$ be the function on $\Z/k\Z$ from which $\tilde{F}$ is pulled back.  Let $\bar{y}$ be an element of $\Z/k\Z$ and $y$ a lift of $\bar{y}$ lying in $1, \ldots, \eta M$.  Then
\beq
\sum_{\bar{x} \in \Z/k\Z} |\bar{F}(\bar{x}+\bar{y}) - \bar{F}(\bar{x})|
= \sum_{x=1}^k |\tilde{F}(x+y) - \tilde{F}(x)| = \sum_{x=1}^k |\Delta_y \tilde{F}(x)|.
\eeq

We now argue that this last sum is $O_\eta(C)$.
First of all, we know from another application of Lemma~\ref{le:shortintervals} that $\|\Delta_y F\ - \EE \Delta_y F\|_1 = O_\eta(C)$.  So the shorter sum $\sum_{x=1}^k |\Delta_y F(x) - \EE \Delta_y F|$ is also $O_\eta(C)$.  And $\sum_{x=1}^k |\Delta_y F(x) - \Delta_y \tilde{F}(x)|$ is smaller than  
\beq
\|\Delta_y \tilde{F} - \Delta_y F\|_1 =
\|\Delta_y(\tilde{F} - F)\|_1 =
O_\eta(\|\tilde{F}-F\|_1) = O_\eta(C).
\eeq
Putting these together, we find that
\beq
\sum_{x=1}^k |\Delta_y \tilde{F}(x) - \EE \Delta_y F| = O_\eta(C).
\eeq
In other words, the total absolute deviation of $\Delta_y \tilde{F}$ from the constant $\EE \Delta_y F$ on $1, \ldots, k$ is $O_\eta(C)$.

In fact, we can show that the constant $\EE \Delta_y F$ is small enough to be safely excluded, by another application of Jensen's inequality as in Lemma~\ref{le:smalldifferences}, as we now explain. Let $X$ be a random variable drawn uniformly from the $k$ values of $\Delta_y \tilde{F}(x)$ as $x$ ranges over $1, \ldots, k$.  The mean of $X$ is zero, since $\tilde{F}$ is periodic in the range $1,\ldots, k+y$.  Jensen's inequality then gives
\beq
\EE|X - \EE \Delta_y F| \geq |\EE(X - \EE \Delta_y F)| = |\EE \Delta_y F|
\eeq
so $|\EE \Delta_y F| = O_\eta(C/M)$.  Moreover,
\beq
\EE|X| \leq \EE|X - \EE \Delta_y F| + |\EE \Delta_y F| = O_\eta(C/M).
\eeq

We conclude that
\begin{equation}
\sum_{x=1}^k |\Delta_y \tilde{F}(x)| = O_\eta(C).
\label{eq:deltayl1small}
\end{equation}

Summing \eqref{eq:deltayl1small} over all $y \in \Z/k\Z$, we find that the sum of {\em all} the absolute differences between values of $\bar{F}$ is $O_\eta(CM)$.  Lemma~\ref{le:smalldifferences} now shows that $|\bar{F} - \EE\bar{F}\|_1 = O_\eta(C)$, and since $\tilde{F}$ consists of $O_\eta(1)$ copies of $\bar{F}$, we have that $\|\tilde{F} - \EE\bar{F}\|_1 = O_\eta(C)$.  This completes the proof.
\end{proof}

(Note that, having left the lemmas and returned to the main body of proof, we once again treat $\eta$ as a fixed constant and implicit constants in Landau notation are allowed to depend on it without explicit notational permission.)

We have shown in \eqref{eq:DeltaDeltabound} that $f_1$ has many discrete second derivatives which are small in $L^1$ norm after restriction to $V_{(1-\epsilon)D}$.  If $\Delta_k \Delta_k' f_1$ were small for {\em all} small enough $k,k'$, we could apply Lemma~\ref{le:linapprox} to show that $f_1$ was well-approximated by a linear function.  But we do not have all those second derivatives, though; \eqref{eq:DeltaDeltabound} only gives us access to $\Delta_{ka} \Delta_{k'a'}$ with $k, k' \in \Z$.  To deal with this congruence condition, we are going to let $P$ be a large integer (but still, crucially, bounded independently of $D$) which is a multiple of both $a$ and $a'$, and which indeed is also a multiple of the corresponding $a,a'$ that arise when we treat $f_2, \ldots, f_n$ in place of $f_1$.  We restrict to a congruence class $d \pmod P$ (and we continue to restrict to the subinterval $V_{(1-\epsilon)D}$).  Having done so, we may define a function $F_i(x) = f_i(\frac{x-d}{P})$ on this subset of $V_D$; then \eqref{eq:DeltaDeltabound} tells us that
\begin{equation}
\| \Delta_k \Delta_{k'} F_i \|_1 = O(D^{2-n} \|A^\vee f\|_1)
\label{eq:secondderivapprox}
\end{equation}
for all $k,k'$ in a range of size proportional to $D$.  Now Lemma~\ref{le:linapprox} tells us that, for all $i$, there is a linear function $\ell_i$ (calling it $L_i$ would conflict with earlier notation) such that
\begin{equation}
\|F_i - \ell_i\|_1 = O(D^{2-n} \|A^\vee f\|_1).
\label{eq:linapprox}
\end{equation}

We will now show that these linear functions can be taken to be constants.  For this, we will need to think about the relationship between the different $f_i$.  First of all, recall that the linear forms $L_i$ obey a unique linear relation, which we denote
\beq
a_1 L_1 + \ldots a_n L_n = 0.
\eeq

Now let $y_{ij}$ be a vector in $\Lambda$ such that $L_\iota(y_{ij}) = 0$ for all $\iota$ other than $i,j$.  The $L_i(y_{ij})$ and $L_j(y_{ij})$ must be in the ratio $a_j:a_i$; write them as $r a_j$ and $r a_i$. 

As usual, let $k'$ be a positive integer smaller than $\eta D$. Then, for any $x \in E_D$ such that $x+k'y_{ij}$ is also in $E_D$, we have
\begin{equation}
A^\vee f (x+k'y_{ij}) - A^\vee f (x)
=
\Delta_{k' r a_j} f_i(L_i(x)) - \Delta_{k'r a_i} f_j(L_j(x)).
\label{eq:segment}    
\end{equation}

Now choose some $k'$ with $k'r = \kappa P$ a multiple of $P$.  Let $S$ be the set of $x$ in $E_D$ such that $L_i(x)$ lies in the domain of $F_i$ and $L_j(x)$ lies in the domain of $F_j$; then $|S|$ is on order $D^{n-1}$.  Now sum \eqref{eq:segment} over all $x \in S$. The left-hand side of this sum is a sum of values of $A^\vee f$ in which each value appears at most twice; so its size is $O(\|A^\vee f\|_1)$.  The right-hand side of the sum, on the other hand, is a sum of quantities of the form
\beq
\Delta_{\kappa a_j} F_i(m_i) - \Delta_{\kappa a_i} F_j(m_j)
\eeq
in which each $m_i$ (and likewise each $m_j$) appears $O(D^{n-2})$ times.

We now use the fact that $F_i$ and $F_j$ are each well-approximated by linear functions $\ell_i(m) = \gamma_i(m) + b_i$ and $\ell_j(m) = \gamma_j(m) + b_j$.  In particular, \eqref{eq:linapprox} implies that 
\beq
\| \Delta_{\kappa a_j} F_i - \Delta_{\kappa a_j} \ell_i \|_1 = \| \Delta_{\kappa a_j} F_i - \kappa \gamma_i a_j \|_1 = O(D^{2-n} \|A^\vee f\|_1)
\eeq
Similarly, $\Delta_{\kappa a_i} F_j$ is $L^1$ close to the constant $\kappa \gamma_j a_i$.  It follows that
\beq
\sum_{x \in S} \Delta_{k' r a_j} f_i(L_i(x))
= \kappa \gamma_j a_i |S| + O(\|A^\vee f\|_1)
\eeq
since each value of $L_i(x)$ appears $O(D^{n-2})$ times as $x$ ranges over $S$, as mentioned above.  Thus

\beq
\sum_{x \in S} \Delta_{k' r a_j} f_i(L_i(x)) - \Delta_{k'r a_i} f_j(L_j(x))
= \kappa (\gamma_i a_j - \gamma_j a_i) |S| + O(\|A^\vee f\|_1).
\eeq

But we have already observed that the sum of the right-hand side of \eqref{eq:segment} is $O(\|A^\vee f\|_1)$.  We thus have
\beq
\kappa (\gamma_i a_j - \gamma_j a_i) |S| = O(\|A^\vee f\|_1).
\eeq

We can take $\kappa$ to be of order $D$, and we have already observed $|S|$ is of order $D^{n-1}$.  This yields
\begin{equation}
\gamma_i a_j - \gamma_j a_i = O(D^{-n} \|A^\vee f\|_1)
\label{eq:gammaproportions}
\end{equation}

In other words, the ratio of $\gamma_i$ to $\gamma_j$ is the same as that of $a_i$ to $a_j$, up to some error whose size we can control.  It follows that, for each $i$,
\begin{equation}
a_i \sum_{j=1}^n \gamma_j = 
\gamma_i(a_1 + a_2 + \ldots + a_n)  + O(D^{-n} \|A^\vee f\|_1).
\label{eq:sumai}
\end{equation}

On the other hand, $\sum_{i=1}^n \gamma_i$ can also be bounded above, as we now explain.  Recall that $\sum_{i=1}^n F_i = 0$.  It then follows from \eqref{eq:linapprox} that 
\beq
\|\sum_{i=1}^n \ell_i\|_1 = O(D^{2-n} \|A^\vee f\|_1).
\eeq
On the other hand, $\sum_{i=1}^n \ell_i$ is a linear function given by
\beq
\sum_{i=1}^n \ell_i(x) = (\sum_{i=1}^n \gamma_i)x + \sum_{i=1}^n b_i
\eeq
and it is easy to see that the $L^1$ norm of this linear function is bounded below by a constant multiple of $(\sum_{i=1}^n \gamma_i ) D^2$.  Putting this together with the upper bound above for $\|\sum_{i=1}^n \ell_i\|_1$ yields
\beq
\sum_{i=1}^n \gamma_i = O(D^{-n} \|A^\vee f\|_1).
\eeq
Combining this with \eqref{eq:sumai} shows that, for each $i$, 
\beq
\gamma_i (a_1 + a_2 + \ldots + a_n) = O(D^{-n} \|A^\vee f\|_1).
\eeq

As long as $a_1 + \ldots + a_n \neq 0$, this shows that $\gamma_i = O(D^{-n} \|A^\vee f\|_1)$.  On the other hand, if $a_1 + \ldots + a_n = 0$, the element $g$ of $U$ given by $g_i(x) = a_i x$ lies in $\ker A^\vee$, since for every point $x$ of $E_D$, we have $A^\vee g(x) = \sum_{i=1}^n g_i(L_i(x)) = \sum_{i=1}^n a_i L_i(x) = 0$.  
 So we have the freedom to subtract a multiple of $g$ from $f$.  In particular, we may replace $f$ with $f-(\gamma_1 / a_1)g$; in so doing, we replace each $\gamma_i$ with $\gamma_i - (\gamma_1/a_1) a_i$.  By \eqref{eq:gammaproportions}, this quantity is $O(D^{-n} \|A^\vee f\|_1)$.  Thus we can in this case too ensure that $\gamma_i = O(D^{-n} \|A^\vee f\|_1)$ for all $i$.

We have thus bounded the slope of the linear function $\ell_i(m) = \gamma_i m + b_i$, and in particular this yields a bound
\beq
\| \ell_i - b_i \|_1 = O(D^{2-n} \|A^\vee f\|_1).
\eeq
Since \eqref{eq:linapprox} gives a bound of the same size for the $L^1$ distance between $F_i$ and $\ell_i$, we conclude that
\begin{equation}
\| F_i - b_i \|_1 = O(D^{2-n} \|A^\vee f\|_1).
\label{eq:approxconstmodP}
\end{equation}

To sum up:  we have now shown that $F$ is well-approximated by a constant function.  And since $F$ is the restriction of $f$ to a congruence class of bounded modulus $P$, and since we can apply this argument to each congruence class of modulus $P$, we have shown that $f$ is well-approximated (on the interval $V_{(1-\epsilon)D}$, as always) by a function on $V_D$ that is {\em periodic} with period $P$.  We write $\tilde{f}$ for this periodic function.  Then $A^\vee \tilde{f}$ is also constant on congruence classes mod $P$. 

For any function $F$ on $V_D$, write $\|F\|_1^{int}$ (for ``interior") for the sum of $|F(m)|$ as $m$ ranges over the subinterval $V_{(1-\epsilon)D}$.  Similarly, for any function $G$ on $E_D$, write $\|G\|_1^{int}$ for the sum of $|G(x)|$ over those $x$ such that $L_i(x) \in V_{(1-\epsilon)D}$ for all $i$.  (We recall that $\epsilon$ has been chosen to be small enough that such $x$ make up a positive proportion of the edges in $E_D$.)

It then follows from \eqref{eq:approxconstmodP} that
\begin{equation}
\|f-\tilde{f}\|_1^{int} = O(D^{2-n}\|A^\vee f\|_1)
\label{eq:ftildefint}
\end{equation}

We now turn to bounds for the periodic function $\tilde{f}$.  For this purpose, we can set up our whole problem in the $\Z/P\Z$ context.  Write $\bar{U}$ for the space of $n$-tuples of functions $\bar{f}_1, \ldots, \bar{f}_n$ which sum to $0$.  Then there is a linear transformation $\bar{A}^\vee$ from $\bar{U}$ to the space of functions on $(\Z/P\Z)^{n-1}$ defined by
\beq
\bar{A}^\vee f (\bar{x}) = \sum \bar{f}_i(L_i(\bar{x})).
\eeq
Write $\bar{U}_0$ for the orthogonal complement of $\ker \bar{A}^\vee$ in $\bar{U}$, and write $\beta$ for the infimum of $||\bar{A}^\vee \bar{f}||_1 / ||\bar{f}||_1$ as $\bar{f}$ ranges over $\bar{U}_0$.  Since $||\bar{A}^\vee \bar{f}||_1 / ||\bar{f}||_1$ is invariant under scaling, we can compute the infimum over the compact subset of $\bar{U}_0$ where $||\bar{f}||_1 = 1$.  Since $||\bar{A}^\vee \bar{f}||_1 / ||\bar{f}||_1$ is nonzero on this compact subset, its infimum $\beta$ is greater than $0$.

Let $B$ be a parallelipiped in $\R^{n-1}$ which is a fundamental domain for $P \Lambda$ and let $B_\Z$ be $\Lambda \cap B$, a set of size $P^{n-1}$; then the number of full copies of $B_\Z$ contained in $E_D$ can be expressed as $c D^{n-1} + O(D^{n-2})$, where $c$ is the volume of $\Omega_\Q(1)$ divided by that of $B$.  The remainder of $E_D$ is made up of $O(D^{n-2})$ partial copies.  The periodic function $\tilde{f}$ is the pullback to $V_D$ of some function $\bar{f} \in U$, and the decomposition of $E_D$ into fundamental domains shows that $\|A^\vee \tilde{f}\|_1$ acquires a mass of $(c D^{n-1} + O(D^{n-2})) \|\bar{A}^\vee \bar{f}\|_1$ from the full copies of $B_\Z$, and at most another $O(D^{n-2} \|\bar{A}^\vee \bar{f}\|_1)$ from the partial copies.  In particular, 
\beq
\|A^\vee \tilde{f}\|_1 = \Omega(D^{n-1} \|\bar{A}^\vee \bar{f}\|_1).
\eeq
Modifying $\tilde{f}$ by an periodic element of $\ker A^\vee$, we can arrange that $\bar{f}$ lies in $\bar{U}_0$.  Having done so, the fact that $\beta$ is bounded away from $0$ can be written as
\beq
\|A^\vee \bar{f}\|_1 = \Omega(\|\bar{f}\|_1)
\eeq
Combining these two, we find that
\beq
\|A^\vee \tilde{f}\|_1 = \Omega (D^{n-1} \|\bar{f}\|_1) = \Omega(D^{n-2} \|\tilde{f}\|_1)
\eeq
or, in other words,
\beq
\|\tilde{f}\|_1 = O(D^{2-n} \|A^\vee \tilde{f}\|_1).
\eeq

The same upper bound immediately follows for 
$\|\tilde{f}\|^{int}_1$, since it is bounded above by $\|\tilde{f}\|_1$.

Note that, since $A^\vee \tilde{f}$ is periodic with bounded period, $\|A^\vee \tilde{f}\|^{int}_1$ is bounded below by a constant multiple of $\|A^\vee \tilde{f}\|_1$.  What's more,
$\|A^\vee (f - \tilde{f})\|^{int}_1$
is a sum of values of $|(f-\tilde{f})(m)|$ for $m$ in the interval $V_{(1-\epsilon)D}$, with each $m$ appearing at most $O(D^{n-2})$ times.  So
\beq
\|A^\vee (f - \tilde{f})\|^{int}_1 = O(D^{n-2} \|f-\tilde{f}\|^{int}) = O(\|A^\vee f\|_1).
\eeq
and we conclude that
\beq
\|A^\vee \tilde{f}\|_1 = O(\|A^\vee \tilde{f}\|_1^{int}) = O(\|A^\vee f\|_1^{int} + \|A^\vee (f-\tilde{f}) \|_1^{int} = O(\|A^\vee f\|_1).
\eeq

Putting all this together, we get
\begin{eqnarray*}
\|f\|_1^{int} \leq   \|f-\tilde{f}\|_1^{int} + \|\tilde{f}\|^{int}_1 \\
= O(D^{2-n}\|A^\vee f\|_1) + O(D^{2-n}\|A^\vee \tilde{f}\|_1) \\
= O(D^{2-n}\|A^\vee f\|_1).
\end{eqnarray*}

which was the statement to be proved.

\end{proof}

Now, at long last, we are ready to use the existence of the approximately balanced weighting.  Suppose there exists a $\Gamma$-positive function $f_0$ in $U_D^\vee$.  Then by Proposition~\ref{pr:smalloninterior}, we may choose a function $f$ in the same coset of $\ker A^\vee$ as $f_0$ such that
\beq
\|f\|^{int}_1 = O(D^{2-n} \|A^\vee f\|_1).
\eeq
and which is still $\Gamma$-positive, since modifying by $\ker A^\vee$ doesn't change the nonnegative function $A^\vee f_0$. 

On the other hand, by \eqref{eq:ellipseatfinequality} we have
\beq
\|A^\vee f\|_1 = O(D^{n-3} \|f\|_1)
\eeq
and thus that
\beq
\|f\|_1^{int} = O(D^{-1} \|f\|_1).
\eeq

In other words, the $L^1$ mass of $f$ is concentrated near the edges of $V_D$.  We now show that this is not possible for a $\Gamma$-positive function.

\begin{prop}
    Let $f \in U_D^\vee$ be a $\Gamma$-positive function.  Then $\|f\|_1 = o(D \|f\|_1^{int}).$
    \label{pr:edgeconcentration}
\end{prop}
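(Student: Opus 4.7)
The plan is to exploit the per-edge $\Gamma$-positivity together with the fact, provided by Lemma~\ref{le:deltaedge} (choosing $\epsilon \leq \delta$), that every \emph{boundary edge} $x \in E_D$---that is, $x$ with some $L_i(x) \in V_D^{edge} := V_D \setminus V_{(1-\epsilon)D}$---has a unique coordinate $i(x)$ with $L_{i(x)}(x) \in V_D^{edge}$, while the remaining $L_j(x)$ all lie in $V_{(1-\epsilon)D}$. Decomposing $f = f^{int} + f^{edge}$ according to support on the interior versus the edge, and setting $h(x) := \sum_j f^{int}_j(L_j(x))$, we obtain $A^\vee f(x) = f^{edge}_{i(x)}(L_{i(x)}(x)) + h(x) \geq 0$ on every boundary edge. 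This gives $f^{edge}_i(v) \geq -m_{i,v}$ for each $v \in V_D^{edge}$ and each $i$, where $m_{i,v} := \min_{x \in E_D,\, L_i(x) = v} h(x)$.

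Combined with the constraint $\sum_i f^{edge}_i(v) = 0$, writing $f^{edge}_i(v) = -m_{i,v} + \lambda_{i,v}$ with $\lambda_{i,v} \geq 0$ forces $\sum_i \lambda_{i,v} = \sum_i m_{i,v}$, and the triangle inequality yields $\sum_i |f^{edge}_i(v)| \leq 2 \sum_i (m_{i,v})_+$. Hence
\[
\|f^{edge}\|_1 \leq 2 \sum_{v \in V_D^{edge}} \sum_i (m_{i,v})_+,
\]
and it suffices to bound this right-hand side by $o(D) \|f^{int}\|_1$.

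The strategy for this bound is to split $V_D^{edge}$ at a threshold $\alpha$ on the slice size $N_{i,v} := |\{x \in E_D : L_i(x) = v\}|$. In the far regime $N_{i,v} \geq \alpha$, whenever $(m_{i,v})_+ > 0$ every $h$-value on the slice is at least $(m_{i,v})_+$, so $\sum_{x: L_i(x) = v} h(x) \geq \alpha (m_{i,v})_+$; summing over $(i,v)$ and using that distinct boundary slices are supported on disjoint sets of boundary edges (by uniqueness of $i(x)$), the total is at most $\|h\|_1 = O(D^{n-2}) \|f^{int}\|_1$ (using $N_{j,v} = O(D^{n-2})$ on the interior support of $f^{int}$), giving a far-regime contribution of $O(D^{n-2}/\alpha) \|f^{int}\|_1$. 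In the near regime $N_{i,v} < \alpha$, the slice-volume estimate $N_{i,v} \sim (D(D\mu - |v|))^{(n-2)/2}$ implies $|V_D^{edge, near}| = O(\alpha^{2/(n-2)}/D)$, and together with the crude bound $(m_{i,v})_+ \leq \|h\|_\infty \leq n \|f^{int}\|_1$ gives a near-regime contribution of $O(\alpha^{2/(n-2)}/D) \|f^{int}\|_1$. Balancing by setting $\alpha = D^{(n-1)(n-2)/n}$ equates the two terms at $O(D^{(n-2)/n}) \|f^{int}\|_1$.

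Since $(n-2)/n < 1$ we conclude $\|f^{edge}\|_1 = O(D^{(n-2)/n}) \|f^{int}\|_1$, and hence $\|f\|_1 = \|f^{int}\|_1 + \|f^{edge}\|_1 = o(D \|f^{int}\|_1)$. The most delicate step is the far-regime aggregation: the pointwise lower bound $\sum_{x: L_i(x)=v} h(x) \geq \alpha (m_{i,v})_+$ must be summed over all $(i,v)$ without incurring a multiplicative overcounting factor, and this is precisely what Lemma~\ref{le:deltaedge} provides---geometrically, $V_D^{edge}$ is accessed through $n$ disjoint caps of $E_D$ sitting near the tangent points $\pm x_1, \ldots, \pm x_n$ of the ellipsoid $\Omega_\R$.
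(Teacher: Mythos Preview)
Your proof is correct and follows essentially the same strategy as the paper's: use Lemma~\ref{le:deltaedge} to ensure boundary edges have a unique extreme coordinate, apply $\Gamma$-positivity edge-by-edge to bound boundary values of $f$ by interior values, and split the boundary according to slice size $N_{i,v}$, handling large slices by aggregation against $\|h\|_1 = O(D^{n-2})\|f\|_1^{int}$ and small slices by counting (via the slice-volume estimate from Proposition~\ref{pr:sameimage}) combined with the crude $L^\infty$ bound.

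The packaging differs slightly. The paper, instead of decomposing $f = f^{int} + f^{edge}$ and using $\sum_i f_i^{edge}(v) = 0$ to get $\sum_i |f_i^{edge}(v)| \leq 2\sum_i (m_{i,v})_+$, picks for each boundary $b$ a single index $i_b$ with $f_{i_b}(b) \leq -\frac{1}{2(n-1)}|f(b)|$ and works only with edges having $e_{i_b} = b$. In the small-slice regime the paper selects exactly one edge per $b$ and counts the resulting $(n-1)|B_0|$ interior terms, whereas you use the uniform bound $(m_{i,v})_+ \leq \|h\|_\infty$. The thresholds and final exponents therefore differ (the paper gets $O(D^{1/2})$ and $O(D^{(n-3)/(n-2)})$ with $Z = D^{n-2.5}$; you get $O(D^{(n-2)/n})$ with $\alpha = D^{(n-1)(n-2)/n}$), but all are strictly below $D$, so the conclusion is the same. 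Your use of the disjointness of boundary slices to control the far-regime aggregation without overcounting is exactly the role played in the paper by summing over $\Sigma$ and bounding multiplicities by the interior degree $O(D^{n-2})$.
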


\begin{proof}
    Write $B$ (for ``boundary") for the complement of $V_{(1-\epsilon)D}$ in $V_D$.  Recall that $\sum_i f_i = 0$.  It follows that for each $b \in B$, there exists at least one $i$ such that $f_i(b) \leq -(1/2(n-1))|f(b)|$; for each $b$ choose such an $i$ and call it $i_b$.  Now let $\Sigma \subset E_D$ be the set of edges such that, for some $b \in B$, we have $e_{i_b} = b$, and such that, for every $j \neq i_b$, we have $e_j \in V_{(1-\epsilon)D}$.  For each $b$, write $\Sigma_b$ for the subset of $\Sigma$ with $e_{i_b} = b$, and for each pair $a \in V_{(1-\epsilon)D}, j \in 1, \ldots, n$, write $\Sigma_{a,j}$ for the subset of $\Sigma$ with $e_j = a$.

    The point of doing this is that for an edge $e$ in $\Sigma$ to have $A^\vee(e) = \sum_i f_i(e_i) \geq 0$ as required, the values of $f(e_j)$ for $j \neq i_b$ must be fairly large in order to counteract the large negative contribution $f_{i_b}(b)$.  That is, we have
    \begin{equation}
    \sum_{j \neq i} |f_j(e_j)| \geq
    |f_{i_b}(b)|.
    \label{eq:bainequality}
    \end{equation}

    And this will put too much of the mass of $f$ in the interior interval $V_{(1-\epsilon)D}$.

    We fix some threshold $Z$, and split $B$ into two regions; write $B_0$ for those $b$ such that $|\Sigma_b| \leq Z$, and $B_1$ for those $b$ with $|\Sigma_b| > Z$.  The reader may wonder at this point how we even know $\Sigma$ is nonempty. Since $\epsilon$ was chosen to be smaller than the $\delta$ in Lemma~\ref{le:deltaedge}, {\em every} edge with $e_{i_b} = b$ has all other $e_j$ in $V_{(1-\epsilon)D}$. Thus, the size of $\Sigma_b$ is simply the number of edges in $E_D$ with $e_{i_b} = b$. 

    We first consider $B_1$.  Sum \eqref{eq:bainequality} over all $\Sigma$ with $b \in B_1$.  On the right hand side, we have
    \beq
    \sum_{b \in B_1} |\Sigma_b| |f_{i_b}(b)|
    \geq \frac{1}{2(n-1)} \sum_{b \in B_1} |\Sigma_b| |f(b)|
    \eeq
    while on the left-hand side, we have a sum of values $|f_j(a)|$.  The number of times each such value can occur is at most the number of edges in $E_D$ with $e_j = a$, which is bounded above by a constant multiple of $D^{n-2}$.  The sum of the left-hand side is thus $O(D^{n-2} \|f\|_1^{int})$.  Putting these together, we find that
    \beq
    Z \|f|B_1\|_1 \leq \sum_{b \in B_1} |\Sigma_b| |f(b)| = O(D^{n-2} \|f\|_1^{int})
    \eeq
    so
    \beq
    \|f|B_1\|_1 = O(Z^{-1} D^{n-2} \|f\|_1^{int}).
    \eeq

    For $B_0$, on the other hand, we choose just {\em one} element $e_b$ of $\Sigma_b$ for each $b \in B_0$.  (Here is where we finally use Proposition~\ref{pr:sameimage}, which tells us that there {\em is} an element of $\Sigma_b$ for each $b \in B_0$.)  We now sum \eqref{eq:bainequality} over the resulting set of $|B_0|$ edges. The right-hand side is
\beq
\sum_{b \in B_0} |f_{i_b}(b)| \geq \frac{1}{2(n-1)}\|f|B_0\|_1.
\eeq
The left-hand side, on the other hand, is a sum of $(n-1)|B_0|$ terms of the form $|f_j(a)|$, and such a sum is certainly $O(|B_0| \|f\|_1^{int})$.  So we have
\beq
\|f|B_0\|_1 = O(|B_0| \|f\|_1^{int}).
\eeq
The tradeoff between these two bounds hinges on the choice of $Z$.  When $Z$ is larger, $|B_0|$ increases, making the upper bound on $\|f|B_0\|_1$ worse, but the upper bound on $\|f|B_1\|_1$, which has $Z$ in the denominator, improves.  The key point is to show that there are not {\em too} many $b$ for which $\Sigma_b$ is very small.  To see this, we recall the computations in the proof of Proposition~\ref{pr:sameimage}.  We showed there that if $b = D\mu - d$ for some integer $d>0$, the set $\Sigma_b$ of all edges with $e_{i_b} = b$ can be expressed as the intersection of a fixed lattice of rank $n-2$ with a translate of a dilate of a fixed ellipsoid by the factor
\beq
(D^2 - (D\mu - d)^2 / \mu^2)^{1/2} =
(2dD\mu^{-1} - d^2 \mu^{-2})^{1/2}
\eeq
which factor, if $d = O(D)$, is $\Omega(d^{1/2} D^{1/2})$.  This implies that
\beq
|\Sigma_b| = \Omega(d^{(n-2)/2} D^{(n-2)/2}).
\eeq

So if for instance we choose $Z = D^{n-2.5}$, we find that $d^{(n-2)/2} D^{(n-2)/2} > Z$ once $d > D^{\frac{n-3}{n-2}}$; in other words, $|B_0| = O(D^{\frac{n-3}{n-2}})$.  We then have 
\beq
\|f|B_1\|_1 = O(Z^{-1} D^{n-2} \|f\|_1^{int}) = O(D^{0.5} \|f\|_1^{int})
\eeq
while
\beq
\|f|B_0\|_1 = O(D^{\frac{n-3}{n-2}}\|f\|_1^{int}).
\eeq
Since $\|f\|_1 = \|f|B_0\|_1 + \|f|B_1\|_1 + \|f\|_1^{int}$, we conclude that
\beq
\|f\|_1 = o(D \|f\|_1^{int})
\eeq
as claimed.
    \end{proof}

We have arrived at the end.  If there exists a $\Gamma$-positive function in $U_D^\vee$, then by Proposition~\ref{pr:smalloninterior}, there is a $\Gamma$-positive function $f$ with $\|f\|^{int}_1 = O(\|f\|_1)$.  But Proposition~\ref{pr:edgeconcentration} shows there is no such $f$.  Thus there are no $\Gamma$-positive functions in $U_D^\vee$ at all once $D$ is sufficiently large, which means that for $D$ sufficiently large, the edges $E_D$ 
of $\Gamma$ admit a balanced weighting, which provides the desired non-deterministic solution of the equation $L_1 \eqd \ldots \eqd L_n$.

\bibliographystyle{alpha}
\bibliography{bib.bib}

\newcommand{\etalchar}[1]{$^{#1}$}
\begin{thebibliography}{BDE{\etalchar{+}}04}

\bibitem[APFV24]{angleranks}
Santiago Arango-Piñeros, Sam Frengley, and Sameera Vemulapalli.
\newblock Galois groups of low dimensional abelian varieties over finite
  fields, 2024.
\newblock arXiv preprint
  \href{https://arxiv.org/abs/2412.03358}{arXiv:2412.03358}.

\bibitem[BDE{\etalchar{+}}04]{BDEPS04}
Neil Berry, Arturas Dubickas, Noam Elkies, Bjorn Poonen, and Chris Smyth.
\newblock The conjugate dimension of algebraic numbers.
\newblock {\em The Quarterly Journal of Mathematics}, 55:237--252, 2004.

\bibitem[Dix97]{Dix97}
JD~Dixon.
\newblock Polynomials with nontrivial relations between their roots.
\newblock {\em Acta Arithmetica}, 82:293--902, 1997.

\bibitem[DJ15]{DJ15}
Arturas Dubickas and Jonas Jankauskas.
\newblock Simple linear relations between conjugate algebraic numbers of low
  degree.
\newblock {\em Journal of Ramanujan Mathematical Society}, 2015.

\bibitem[HY22]{HY22}
Will Hardt and John Yin.
\newblock Linear relations among galois conjugates over $\mathbf{F}_q(t)$.
\newblock {\em Research in Number Theory}, 8(2):34, 2022.

\bibitem[Smy86]{Smy86}
C.J. Smyth.
\newblock Additive and multiplicative relations connecting conjugate algebraic
  numbers.
\newblock {\em Journal of Number Theory}, 23(2):243–254, 1986.

\bibitem[Spe]{speyer:moanswer}
David~E Speyer.
\newblock Tell me an algebraic integer that isn't an eigenvalue of the sum of
  two permutations.
\newblock MathOverflow.

\end{thebibliography}
\end{document}